\newtheorem{thm}{Theorem}[section]
\newtheorem{cor}[thm]{Corollary}
\newtheorem{prop}[thm]{Proposition}
\theoremstyle{definition}
\theoremstyle{remark}
\newtheorem{rem}[thm]{Remark}
\numberwithin{equation}{section}
\newcommand{\no}{\noindent}
\begin{document}
\title{Fatou-Julia dichotomy of matrix-valued polynomials}
\thanks{The author is  supported by National Post-doctoral Fellowship, SERB, India}
\subjclass{Primary: 32H02  ; Secondary : 32H50}
\author{Ratna Pal}

\address{Department of Mathematics, Indian Institute of Science Education and Research, Pune, Maharashtra-411008, India}
\email{ratna.math@gmail.com} 

\begin{abstract}
This article gives a precise description of the Fatou sets and Julia sets  of matrix-valued polynomials in $\mathcal{M}(2,\mathbb{C})$ in terms of the corresponding polynomials in $\mathbb{C}$. Further,  we construct Green functions and B\"{o}ttcher-type functions for these matrix-valued polynomials.
	
\end{abstract}

\maketitle
\no 

\section{Introduction}
\no 
The map $\varphi:z\mapsto z^2$ is a central example in complex dynamics. Its
iterates can be computed and the Fatou-Julia dichotomy  can
be seen in a transparent manner. Motivated by this example, we look at $\mathcal{M}(2,\mathbb{C})$, the set of all $2$ by $2$ matrices over $\mathbb{C}$, and consider
the map $M \mapsto M^2$. This is in fact a polynomial map of the form
\[
(x,y,z,t) \mapsto (x^2+yz, y(x+t),  z(x+t),  t^2+yz)
\] 
from $\mathbb{C}^4$ to itself with maximal rank.
The dynamics of such matrix-valued maps has recently been looked
at in \cite{CD}. 
By working within the paradigm of matrix-valued maps,  we hope to study the dynamics of a large class of holomorphic endomorphisms in $\mathbb{C}^4$.
The underlying ring structure of matrices plays a key role in this direction. 

Let $\Phi: M\mapsto M^2$.  For $M\in \mathcal{M}(2,\mathbb{C})$, let $\rho(M)$ be the spectral radius of $M$.  Just as the unit circle plays an important role in determining the dynamics of the map $z\mapsto z^2$ in $\mathbb{C}$,  it turns out that a similar role is played by  
$$
J_\Phi=\{M: \rho(M)=1\},
$$
 which is in fact  the Julia set of $\Phi$.
This similarity is not merely a coincidence, rather gives a hint towards understanding the Julia set of a matrix-valued polynomial $P$ in terms of the corresponding polynomial $p$ in $\mathbb{C}$.

In this note, we study the Fatou-Julia dichotomy of the matrix-valued polynomials $\Phi$ in $\mathcal{M}(2,\mathbb{C})$ which respect conjugacy, i.e,
\[
{\Phi(QMQ^{-1})=Q\Phi(M)Q^{-1}}
\]
for all $Q\in {\rm{GL}}(2,\mathbb{C})$, the set of all invertible matrices in $\mathcal{M}(2,\mathbb{C})$. It turns out that such matrix-valued polynomials are precisely of the form
\begin{equation}\label{poly}
P(M)=a_k M^k+ a_{k-1}M^{k-1}+\cdots +a_1 M+a_0
\end{equation}
for $M\in \mathcal{M}(2,\mathbb{C})$ where $a_i\in \mathbb{C}$ for $1\leq i \leq k$ with $k\geq 2$ and $a_k\neq 0$. Therefore, there is a one to one correspondence between the set of polynomials in $\mathbb{C}$ and the set of matrix-valued polynomials in $\mathcal{M}(2,\mathbb{C})$ which respect conjugacy.

We denote the set of all diagonal matrices and the set of all non-diagonal matrices in $\mathcal{M}(2,\mathbb{C})$ by $\mathcal{D}$ and $\mathcal{ND}$ respectively. Further, $[\lambda_1,\lambda_2]$ and $[\lambda]$ will denote the matrices 
\[
\left[ {\begin{array}{cc}
\lambda_1 & 0 \\
0 & \lambda_2 \\
\end{array} } \right]
\text{ and }
\left[ {\begin{array}{cc}
\lambda & 1 \\
0 & \lambda \\
\end{array} } \right]
\]
respectively. Let 
\begin{equation*}
K_p=\{z: \{p^r(z)\}_{r\geq 1} \text{ is bounded}\},\ J_p=\partial K_p,\ F_\infty(p)=\mathbb{C}\setminus K_p
\end{equation*}
and
\begin{align*}
 K_P=\{M\in \mathcal{M}(2,\mathbb{C}): \{P^r(M)\}_{r\geq 1} \text{ is bounded}\}.
\end{align*}
The set $F_\infty(P)$ denotes the collection of all matrices in $\mathcal{M}(2,\mathbb{C})$ with at least one of the eigenvalues in $F_\infty(p)$. Note that $F_\infty(P)$ is always open in $\mathbb{C}^4$ but the set $K_P$ is not necessarily closed. In fact, we shall see in Section 2 that $F_\infty(P)$ is the complement of $\overline{K_P}$. We record the main theorem.
\begin{thm}\label{Julia}
Let $p$ be a polynomial in $\mathbb{C}$ with $\deg(p)\geq 2$ and $P$ be the corresponding matrix-valued polynomial. Then the Julia set of $P$ is 
\begin{align*}
J_P=\partial\overline{K_P}=\{Q[\lambda_1,\lambda_2]Q^{-1}: (\lambda_1,\lambda_2)\in (K_p\times J_p)\cup (J_p\times K_p), Q\in {\rm{GL}}(2,\mathbb{C})\}\\
\bigcup \{Q[\lambda]Q^{-1}:\lambda\in J_p, Q\in {\rm{GL}}(2,\mathbb{C})\}.
\end{align*}
\end{thm}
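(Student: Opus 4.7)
The plan is to exploit the conjugacy-invariance $P(QMQ^{-1}) = QP(M)Q^{-1}$ to reduce the matrix dynamics to scalar dynamics via Jordan normal form, and then apply Montel-type normality arguments. As a first step, I will compute the iterates on canonical forms. Trivially, $P^r([\lambda_1, \lambda_2]) = [p^r(\lambda_1), p^r(\lambda_2)]$ for diagonal matrices. For a Jordan block $[\lambda]$, setting $N = [\lambda] - \lambda I$ so that $N^2 = 0$, one has $[\lambda]^k = \lambda^k I + k\lambda^{k-1} N$, hence $P^r([\lambda]) = p^r(\lambda) I + (p^r)'(\lambda) N$ by induction and the chain rule. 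Uniformly, by Cayley--Hamilton, $P^r(M) = \alpha_r(\lambda_1, \lambda_2) I + \beta_r(\lambda_1, \lambda_2) M$, where $\alpha_r, \beta_r$ are the Hermite interpolation coefficients of $p^r$ at the eigenvalues; these depend continuously on $(\lambda_1, \lambda_2)$, reducing to $\beta_r = (p^r)'(\lambda)$ and $\alpha_r = p^r(\lambda) - \lambda (p^r)'(\lambda)$ in the coalescent case.

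Second, I will establish $J_P = \partial \overline{K_P}$ via normality. On $F_\infty(P) = \mathcal{M}(2,\mathbb{C}) \setminus \overline{K_P}$ (Section~2), at least one eigenvalue of $M$ escapes to $F_\infty(p)$, so the spectral radius of $P^r(M)$ blows up and $|P^r(M)| \to \infty$ locally uniformly, yielding normality in the extended sense. On $\mathrm{int}(\overline{K_P})$, both eigenvalues remain in a compact subset of $\mathrm{int}(K_p) \subset F_p$ under small perturbations; normality of $\{p^r\}$ on $F_p$ supplies uniform bounds on $p^r$ and $(p^r)'$ on compact subsets of $\mathrm{int}(K_p)$, and the interpolation formula then gives uniform bounds on $\{P^r\}$ on compact subsets of $\mathrm{int}(\overline{K_P})$. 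Montel's theorem in $\mathbb{C}^4$ produces normality. Conversely, at any $M_0 \in \partial \overline{K_P}$, every neighborhood meets $F_\infty(P)$, on which $|P^r| \to \infty$, and also meets $\mathrm{int}(\overline{K_P})$ (when nonempty), on which $\{P^r\}$ is uniformly bounded, so no uniform limit exists and $\{P^r\}$ fails to be normal at $M_0$.

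Third, I will translate $\partial \overline{K_P}$ into the explicit description. Continuity of eigenvalues yields $\overline{K_P} = \{M : \text{both eigenvalues of } M \text{ lie in } K_p\}$ and $\mathrm{int}(\overline{K_P}) = \{M : \text{both eigenvalues lie in } \mathrm{int}(K_p)\}$, so $\partial \overline{K_P}$ consists of matrices with both eigenvalues in $K_p$ and at least one in $J_p$. Splitting by Jordan type recovers the stated formula: the diagonalizable stratum contributes $Q[\lambda_1, \lambda_2]Q^{-1}$ with $(\lambda_1, \lambda_2) \in (K_p \times J_p) \cup (J_p \times K_p)$, while the Jordan stratum contributes $Q[\lambda]Q^{-1}$ with double eigenvalue $\lambda \in J_p$. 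The main subtle point is the uniform boundedness of $\{P^r\}$ on compact subsets of $\mathrm{int}(\overline{K_P})$: a direct argument via $P^r(Q[\lambda_1, \lambda_2]Q^{-1}) = Q[p^r(\lambda_1), p^r(\lambda_2)]Q^{-1}$ breaks down near the Jordan stratum, where the conjugating matrices have unbounded condition number; the interpolation formula $P^r(M) = \alpha_r I + \beta_r M$ bypasses this by expressing the iterate directly in terms of $M$ and continuous functions of its spectral data, controlled uniformly by the normality of $\{p^r\}$ on the Fatou set.
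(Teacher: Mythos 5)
Your proposal is correct in outline and takes a genuinely different route from the paper's proof. The paper establishes normality of $\{P^n\}$ on $\mathrm{int}(\overline{K_P})$ by the Aladro--Krantz (Zalcman-type) rescaling criterion: assuming non-normality, a rescaled subsequence $\zeta\mapsto P^{j_k}(M_{j_k}+\rho_{j_k}\xi_{j_k}\zeta)$ converges to a non-constant entire limit $g$, and one then shows $g$ is bounded (hence constant) via a case analysis over Fatou-component types (attracting, parabolic, Siegel) and eigenvalue multiplicities (distinct, coalescent, same or different components), with explicit control on the off-diagonal entries of the conjugating matrices $Q_{j_k}^\zeta$. You instead invoke the Cayley--Hamilton representation $P^r(M)=\alpha_r I+\beta_r M$, with $\alpha_r,\beta_r$ the Hermite interpolation coefficients --- symmetric holomorphic functions of the eigenvalues, extending continuously across the diagonal --- and prove \emph{direct} local boundedness of $\{P^r\}$ on compacts in $\mathrm{int}(\overline{K_P})$, then apply Montel. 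This is cleaner: it collapses the paper's case analysis, it makes the holomorphic dependence of $P^r(M)$ on $M$ transparent without tracking the eigenbasis, and it sidesteps (rather than laboriously controls) the unbounded-conjugator problem near the Jordan stratum. It also dispenses with the paper's auxiliary assumption that the relevant Fatou components are fixed by $p$.

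One step is glossed over and needs spelling out. You assert that uniform bounds on $p^r$ and $(p^r)'$ on compact subsets of $\mathrm{int}(K_p)$, combined with the interpolation formula, give uniform bounds on $\{P^r\}$. But a pointwise bound on $p^r$ and $(p^r)'$ does not immediately bound the divided difference
\[
\beta_r(\lambda_1,\lambda_2)=\frac{p^r(\lambda_1)-p^r(\lambda_2)}{\lambda_1-\lambda_2}
\]
for $\lambda_1\neq\lambda_2$ close but distinct, since the numerator is only $O(1)$ while the denominator is small. To close this, fix a compact $K\subset\subset K'\subset\subset\mathrm{int}(K_p)$ containing the eigenvalues, and split: if $|\lambda_1-\lambda_2|\geq\delta$ then $|\beta_r|\leq 2\sup_{K}|p^r|/\delta$; if $|\lambda_1-\lambda_2|<\delta$ with $\delta<\mathrm{dist}(K,\partial K')$, then the segment from $\lambda_2$ to $\lambda_1$ lies in $K'$, and
\[
\beta_r(\lambda_1,\lambda_2)=\int_0^1 (p^r)'\bigl(\lambda_2+t(\lambda_1-\lambda_2)\bigr)\,dt
\]
is bounded by $\sup_{K'}|(p^r)'|$, which is finite uniformly in $r$ by Cauchy estimates from a still larger compact. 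The same estimate handles $\alpha_r$. With that in place, the rest of your argument --- divergence on $F_\infty(P)$, non-normality at boundary points by mixing both behaviors, and the Jordan-form translation of $\partial\overline{K_P}$ --- goes through.
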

Hence, the Julia set $J_P$ of $P$ can be completely characterized in terms of the Julia set $J_p$ of $p$ and as a consequence, we obtain many interesting properties of the Fatou set and Julia set of $P$. For example, a direct application of Theorem \ref{Julia} shows that there is no wandering Fatou component of $P$ (see \cite{MXD}, for an example of a two dimensional polynomial map with wandering Fatou component). Also, the Fatou set and Julia set of $P$ are completely invariant under $P$ which is again not the case in general (see \cite{FS1}).

The first step in proving Theorem \ref{Julia} is to look at the Jordan canonical forms of the matrices in $\mathcal{M}(2,\mathbb{C})$. Since any of the eigenvalues of a matrix $M\in \mathcal{M}(2,\mathbb{C})$ being in $F_\infty(p)$ assures the uniform divergence of the sequence $\{P^n\}$ to infinity in a small neighborhood of $M$, the eigenvalues of the matrices which lie in the Julia set $J_P$ must be in the filled Julia set $K_p$ of the polynomial $p$. 
Next, we show that $J_P\cap {\rm{int}}(K_P)=\emptyset$. We shall see in Section 2 that ${\rm{int}}(K_P)$ consists precisely of those matrices whose eigenvalues lie in ${\rm{int}}(K_p)$ and by the classification theorem of Fatou components  (see \cite{MNTU}), any such eigenvalue lies either in an  attracting Fatou component or in a parabolic Fatou component or in a Siegel disk. Then we apply Theorem $3.1$ in \cite{AK} to show the normality of the sequence $\{P^n\}$ in a small neighborhood of any matrix in ${\rm{int}}(K_P)$. Hence, finally we conclude that the Julia set $J_P$ of $P$ consists of those matrices in $\mathcal{M}(2,\mathbb{C})$ whose at least one of the eigenvalues is in $J_p$ and the other is in $K_p$.

Next, we construct Green functions of matrix-valued polynomials $P$ in $\mathbb{C}^4$. For each $n\geq 1$, let 
\[
G_n(M)=\frac{1}{d^n}\log \lVert P^n(M)\rVert
\]
for $M\in \mathcal{M}(2,\mathbb{C})$. Clearly, each $G_n$ is a pluri-subharmonic function in $\mathbb{C}^4$.
\begin{thm}\label{Green fun}
Let $p$ be a polynomial in $\mathbb{C}$ of degree $d\geq 2$ and $P$ be the corresponding matrix-valued polynomial in $\mathbb{C}^4$. Then the sequence of functions $G_n$ converges uniformly on compacts to the function $G$ in $\mathbb{C}^4$. The function $G$ is continuous and pluri-subharmonic in $\mathbb{C}^4$ and pluri-harmonic in ${\rm{int}}(K_P) \cup F_\infty(P)$ vanishing identically on $\overline{K_P}$. Further, 
\[
G\circ P(M)=d G(M)
\]
for all $M\in \mathcal{M}(2,\mathbb{C})$ and
\[
G(M)=\log \rho(M)+O(1)
\]  
in $\{M\in \mathcal{M}(2,\mathbb{C}):\rho(M)>R\}$ for $R$ sufficiently large.
\end{thm}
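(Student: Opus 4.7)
The plan is to identify $G$ explicitly as $G(M)=\max\{g_p(\lambda_1),g_p(\lambda_2)\}$, where $g_p$ is the classical Green function of the scalar polynomial $p$ and $\lambda_1,\lambda_2$ are the eigenvalues of $M$, and then to transfer the standard properties of $g_p$ across. For the pointwise limit I would write $M=QJQ^{-1}$ with Jordan form $J\in\{[\lambda_1,\lambda_2],[\lambda]\}$, so that $P^n(M)=QP^n(J)Q^{-1}$ and
\[
P^n([\lambda_1,\lambda_2])=[p^n(\lambda_1),p^n(\lambda_2)], \qquad P^n([\lambda])=\left[\begin{array}{cc} p^n(\lambda) & (p^n)'(\lambda)\\ 0 & p^n(\lambda)\end{array}\right].
\]
Conjugation by $Q$ perturbs $\log\|P^n(M)\|$ by a quantity independent of $n$, which vanishes after division by $d^n$. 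In the diagonalizable case, $G_n(M)\to\max\{g_p(\lambda_1),g_p(\lambda_2)\}$ is then immediate. In the Jordan-block case, the extra input required is $d^{-n}\log|(p^n)'(\lambda)|\to g_p(\lambda)$, which I would establish from the chain rule $(p^n)'(\lambda)=\prod_{j=0}^{n-1}p'(p^j(\lambda))$ together with the asymptotic $\log|p'(z)|=(d-1)\log|z|+O(1)$ at infinity, splitting into the cases $\lambda\in F_\infty(p)$ and $\lambda\in K_p$.

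For uniform convergence on compacts, iterating $\|P(M)\|\leq C_1(1+\|M\|)^d$ produces the locally uniform upper bound $G_n(M)\leq \log^+\|M\|+C$, and the main tool is the telescoping identity
\[
G_{n+1}(M)-G_n(M)=\frac{1}{d^{n+1}}\log\frac{\|P(P^n(M))\|}{\|P^n(M)\|^d}.
\]
On compacts in $F_\infty(P)$ the eigenvalues of $P^n(M)$ escape to infinity and $\|P^n(M)\|$ is eventually large, forcing the ratio to be $O(1)$ and the telescoped series to converge geometrically with rate $d^{-n}$. On compacts in ${\rm{int}}(K_P)$, Theorem~\ref{Julia} keeps $\{P^n(M)\}$ uniformly bounded, and standard control of $\|P^n(M)\|$ from above and below (via the Schur triangularization) gives $G_n\to 0$ uniformly. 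Near $J_P$ the two regimes are matched using the classical uniform convergence $d^{-n}\log^+|p^n|\to g_p$ on $\mathbb{C}$.

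With uniform convergence in place, the remaining conclusions follow quickly. Continuity and pluri-subharmonicity of $G$ are inherited from those of the $G_n$, and the functional equation comes from $G(P(M))=\lim d^{-n}\log\|P^{n+1}(M)\|=d\,G(M)$. The asymptotic $G(M)=\log\rho(M)+O(1)$ for large $\rho(M)$ is immediate from $g_p(z)=\log|z|+O(1)$ at infinity applied to the eigenvalue of larger modulus. Pluri-harmonicity on ${\rm{int}}(K_P)$ is trivial since $G\equiv 0$ there; on $F_\infty(P)$, away from the discriminant locus $\{\lambda_1=\lambda_2\}$ the eigenvalues are locally holomorphic functions of $M$ and $g_p$ is pluri-harmonic on $F_\infty(p)$, from which pluri-harmonicity of $G$ on the open dense piece of $F_\infty(P)$ follows, with continuity extending this across the discriminant. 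The main obstacle I anticipate is the Jordan-block contribution: the derivative estimate $d^{-n}\log|(p^n)'(\lambda)|\to g_p(\lambda)$ must be uniform in $\lambda$ on compacts, and must match quantitatively with the divided differences $(p^n(\lambda_1)-p^n(\lambda_2))/(\lambda_1-\lambda_2)$ of nearby diagonalizable fibres as $\lambda_1\to\lambda_2$, so that uniform convergence of $G_n$ is preserved across jumps in Jordan type.
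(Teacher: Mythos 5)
Your proposal arrives at the same explicit description $G(M)=\max\{g_p(\lambda_1),g_p(\lambda_2)\}$ as the paper, but it takes a genuinely different route in two places. For the key derivative estimate in the Jordan-block case, namely $d^{-n}\log|(p^n)'(\lambda)|\to g_p(\lambda)$, you use the chain rule $(p^n)'(\lambda)=\prod_{j=0}^{n-1}p'(p^j(\lambda))$ together with the elementary asymptotic $\log|p'(z)|=(d-1)\log|z|+O(1)$ and the estimate $\log|p^j(\lambda)|=d^j g_p(\lambda)+O(1)$ on the escape set. The paper instead differentiates the B\"ottcher conjugacy $\varphi_p\circ p^n=\varphi_p^{d^n}$ to get $(p^n)'(z)/(d^nA_zp^n(z))=\varphi_p(p^n(z))/\bigl(p^n(z)\varphi_p'(p^n(z))\bigr)$ and lets $n\to\infty$. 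Your route is more elementary and self-contained; the paper's is arguably cleaner since it reuses the B\"ottcher coordinate, which is needed for Theorem~\ref{Bot} anyway. For uniform convergence you introduce a telescoping series $G_{n+1}-G_n$ controlled separately on $F_\infty(P)$ and ${\rm int}(K_P)$, whereas the paper simply cites the locally uniform convergence of $G_p$ together with the eigenvalue formula; both treatments leave the matching near $J_P$ and across the discriminant locus sketchy, a difficulty you at least flag explicitly.

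There is, however, a gap in your pluri-harmonicity argument on $F_\infty(P)$. You write that local holomorphy of the eigenvalues off the discriminant locus, combined with pluri-harmonicity of $g_p$ on $F_\infty(p)$, gives pluri-harmonicity of $G$ there. But $G(M)=\max\{g_p(\lambda_1(M)),g_p(\lambda_2(M))\}$ is the maximum of two pluri-harmonic functions, and a maximum of pluri-harmonic functions is in general only pluri-subharmonic: one picks up a positive $dd^c$-mass supported on the ridge $\{g_p(\lambda_1)=g_p(\lambda_2)\}$, which is non-empty in the region where both eigenvalues lie in $F_\infty(p)$. So the step ``from which pluri-harmonicity of $G$ on the open dense piece of $F_\infty(P)$ follows'' does not go through as stated. (For what it is worth, the paper's own argument here --- applying Harnack's theorem to the sequence $d^{-n}\log\|P^n(M)\|$ on $\{\rho(M)>R\}$ --- is also problematic, since $\log\|\cdot\|$ is pluri-subharmonic but not pluri-harmonic; but the paper's difficulty does not excuse the gap in yours.) To salvage your argument you would need, at a minimum, to restrict to the subregion of $F_\infty(P)$ where exactly one eigenvalue escapes, where indeed $G=g_p(\lambda_1)$ and the composition-with-holomorphic-eigenvalue argument does give pluri-harmonicity.
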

 Let $R>1$ be sufficiently large such that the B\"{o}ttcher function $\varphi_p$ of $p$ is well-defined in $\Omega_R=\{z\in \mathbb{C}:\lvert z\rvert >R\}$.  Now consider the open set 
$$
\Omega=\{M\in \mathcal{M}(2,\mathbb{C}): \text{the eigenvalues of $M$ lie in } \Omega_R\}.
$$
We construct a function 
$\Phi_P: \Omega \rightarrow \Omega$, 
analogous to the B\"{o}ttcher function $\varphi_p$ of $p$ in $\mathbb{C}$. 
\begin{thm}\label{Bot}
Let $p$ be a polynomial in $\mathbb{C}$ of degree $d\geq 2$ and $P$ be the corresponding matrix-valued polynomial. Then, there exists a holomorphic function $\Phi_P: \Omega\rightarrow \Omega$ such that
\begin{equation*} \label{pro 1}
\Phi_P \circ P(M)= {(\Phi_P(M))}^d
\end{equation*}
and 
\begin{equation*}\label{pro 2}
G(M)=\log \rho(\Phi_P(M))
\end{equation*}
for all $M\in \Omega$.
\end{thm}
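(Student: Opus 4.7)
The plan is to define $\Phi_P$ by applying holomorphic functional calculus to the scalar B\"{o}ttcher coordinate $\varphi_p$. For $M\in\Omega$, choose a smooth simple closed contour $\gamma$ contained in $\Omega_R$ and enclosing both eigenvalues of $M$, and set
\[
\Phi_P(M) \;=\; \varphi_p(M) \;=\; \frac{1}{2\pi i}\oint_\gamma \varphi_p(z)\,(zI-M)^{-1}\,dz.
\]
Openness of $\Omega$ permits the same contour to be used over a neighborhood of any given $M$, and since the resolvent $(zI-M)^{-1}$ depends holomorphically on the entries of $M$ for $z\notin\sigma(M)$, this produces a well defined holomorphic map $\Phi_P$ on $\Omega$. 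The standard computation of the calculus shows that $\Phi_P$ acts by $Q[\lambda_1,\lambda_2]Q^{-1}\mapsto Q[\varphi_p(\lambda_1),\varphi_p(\lambda_2)]Q^{-1}$ on diagonalizable matrices and by $Q[\lambda]Q^{-1}\mapsto Q\bigl(\varphi_p(\lambda)I + \varphi_p'(\lambda)N\bigr)Q^{-1}$ on nondiagonalizable ones, where $N$ denotes the standard nilpotent part.

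I would then verify the three stated properties. First, $\Phi_P(\Omega)\subset\Omega$: by the spectral mapping theorem $\sigma(\Phi_P(M)) = \{\varphi_p(\lambda_1),\varphi_p(\lambda_2)\}$, and since $\varphi_p(z)/z\to 1$ as $|z|\to\infty$, enlarging $R$ if necessary arranges $|\varphi_p(z)|>R$ whenever $|z|>R$. Next, the functional equation: because $P(M)=p(M)$ as matrices, the composition and power rules of the calculus combined with the scalar identity $\varphi_p\circ p = \varphi_p^d$ give
\[
\Phi_P(P(M)) \;=\; \varphi_p(p(M)) \;=\; (\varphi_p\circ p)(M) \;=\; \varphi_p(M)^d \;=\; \Phi_P(M)^d.
\]
Finally, for the Green function identity I would first check it on diagonalizable $M = Q[\lambda_1,\lambda_2]Q^{-1}$: here $P^n(M) = Q[p^n(\lambda_1),p^n(\lambda_2)]Q^{-1}$, and dividing $\log\|P^n(M)\|$ by $d^n$ and invoking the scalar relation $G_p(z) = \log|\varphi_p(z)|$ on $F_\infty(p)$ yields $G(M) = \max_i \log|\varphi_p(\lambda_i)| = \log\rho(\Phi_P(M))$. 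Continuity of both sides on $\Omega$ (via Theorem~\ref{Green fun} and the holomorphy of $\Phi_P$) then extends the identity to all of $\Omega$, since the diagonalizable matrices are dense there.

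The step I expect to require most care is the composition rule $\varphi_p(p(M)) = (\varphi_p\circ p)(M)$, a classical but nontrivial consequence of the functional calculus; I would justify it by choosing nested contours $\gamma_1\subset\Omega_R$ around $\sigma(M)$ and $\gamma_2\subset\Omega_R$ around $p(\gamma_1)\cup\sigma(p(M))$ and interchanging the order of integration via Fubini. A secondary technical point is synchronising the choice of $R$ so that both the scalar B\"{o}ttcher construction and the self-mapping property $\Phi_P(\Omega)\subset\Omega$ hold simultaneously; this is harmless because the asymptotic $\varphi_p(z)\sim z$ at infinity makes all sufficiently large $R$ admissible.
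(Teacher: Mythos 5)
Your proof is correct but uses a genuinely different construction than the paper. The paper realizes $\Phi_P$ as the locally uniform limit on $\Omega$ of the Laurent polynomials $\Phi_n(M)=bM+b_0+b_1M^{-1}+\cdots+b_n M^{-n}$ (the partial sums of the scalar B\"{o}ttcher expansion applied to $M$): it shows $\{\Phi_n\}$ is locally bounded by controlling the entries of $\Phi_n(M)$ through uniform bounds on ratios such as $a_Mb_M/\det Q_M$ near a fixed $M_0$, then invokes Montel's theorem and identifies the unique subsequential limit with the explicit Jordan-form formula for $\Phi$. Your Riesz--Dunford integral $\Phi_P(M)=\frac{1}{2\pi i}\oint_\gamma\varphi_p(z)(zI-M)^{-1}\,dz$ produces the same map (termwise, $z^k\mapsto M^k$, so the integral of the Laurent series recovers the paper's limit), but it yields holomorphy of $\Phi_P$ immediately from the integral representation with no normality argument, and it converts the semiconjugacy $\Phi_P\circ P=\Phi_P^{\,d}$ into an instance of the functional-calculus composition rule rather than an eigenvalue-by-eigenvalue verification. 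What you gain is conceptual economy; what you spend is reliance on the spectral mapping theorem and the nested-contour/Fubini justification of $\varphi_p(p(M))=(\varphi_p\circ p)(M)$, which the paper avoids by working directly with Jordan forms. Your density-and-continuity argument for extending $G(M)=\log\rho(\Phi_P(M))$ from diagonalizable matrices to all of $\Omega$ is sound and replaces the paper's separate treatment of the non-diagonalizable case. One small point worth noting: $\varphi_p(z)/z\to b$ where $b^{d-1}$ is the leading coefficient of $p$, not necessarily $b=1$; your hedge of enlarging $R$ if necessary is exactly the right fix.
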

The results obtained in this note show that the dynamics of a matrix-valued polynomial $P$ in $\mathcal{M}(2,\mathbb{C})$ to a large extent replicates the dynamics of the corresponding polynomial $p$ in $\mathbb{C}$.  We belive that implementing a similar line of arguments, the results obtained here can be generalized for matrix-valued polynomials in $\mathcal{M}(n,\mathbb{C})$ for $n\geq 3$. Further, it would be also interesting to investigate the dynamics of the matrix-valued polynomials which are not necessarily compatible with conjugation. The dynamics of one such example, namely the map $\Phi:M\rightarrow [\lambda, \lambda^{-1}]M^2$, has been considered in \cite{CD}. Due to lack of compatibility with conjugation, the dynamics of the maps of this kind is expected to be more complicated than the case we have considered in this note and thus  a precise description of the Julia sets might not always be possible. 

\medskip 
\no 
\no {\bf Acknowledgement:} I would like to thank Kaushal Verma for  valuable  suggestions.



\section{Matrices having bounded orbits}
\begin{prop}
For a polynomial $p$ in $\mathbb{C}$, with $\deg(p)\geq 2$, the following holds:
\begin{align*}
\{M\in \mathcal{D}: \text{the eigenvalues of } M \text{are in }K_p\}
\cup \{M\in \mathcal{ND}:\text{the eigenvalue of } M \text{ is in } {\rm{int}}(K_p)\}  \\
\subseteq  K_P \subseteq \{M\in \mathcal{M}(2,\mathbb{C}): \text{the eigenvalues of $M$ are in $K_p$} \}.
\end{align*}
\end{prop}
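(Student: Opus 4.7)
My plan is to prove the two inclusions separately, using throughout the elementary observation that if $M\in\cal{M}(2,\mbb{C})$ has eigenvalues $\la_1,\la_2$, then $P^n(M)$ has eigenvalues $p^n(\la_1),p^n(\la_2)$. This follows by conjugating $M$ to an upper-triangular form $QTQ^{-1}$ with diagonal entries $\la_1,\la_2$; the conjugation compatibility of $P$ gives $P^n(M)=QP^n(T)Q^{-1}$, and $P^n(T)$ is upper-triangular with $p^n(\la_1),p^n(\la_2)$ on the diagonal. The inclusion $K_P\subseteq\{M:\text{eigenvalues of }M\text{ lie in }K_p\}$ is then immediate: if some eigenvalue $\la$ of $M$ fails to lie in $K_p$, then $\abs{p^n(\la)}\to\infty$, so $\norm{P^n(M)}\geq\rho(P^n(M))\geq\abs{p^n(\la)}$ forces the $P$-orbit of $M$ to be unbounded.

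For the reverse inclusion I would treat $\cal{D}$ and $\cal{ND}$ separately. When $M\in\cal{D}$, i.e.\ $M=Q[\la_1,\la_2]Q^{-1}$ with $\la_1,\la_2\in K_p$, conjugation compatibility gives $P^n(M)=Q[p^n(\la_1),p^n(\la_2)]Q^{-1}$, and the bound $\norm{P^n(M)}\leq\norm{Q}\norm{Q^{-1}}\max_i\abs{p^n(\la_i)}$ is uniform in $n$ because $K_p$ is bounded.

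The substantive case is $M\in\cal{ND}$, i.e.\ $M=Q[\la]Q^{-1}$ with $\la\in\text{int}(K_p)$. Writing $[\la]=\la I+N$ with $N$ the strictly upper-triangular nilpotent satisfying $N^2=0$, the binomial expansion collapses to $(\la I+N)^j=\la^j I+j\la^{j-1}N$, giving, for any polynomial $q$, the Taylor-type formula $q(\la I+N)=q(\la)I+q'(\la)N$. Induction on $n$ together with the chain rule then yields
\[
P^n([\la])=p^n(\la)\,I+(p^n)'(\la)\,N,
\]
so that $P^n(M)=p^n(\la)I+(p^n)'(\la)\,QNQ^{-1}$. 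The scalar $p^n(\la)$ always stays in the bounded set $K_p$; the problem therefore reduces to bounding $(p^n)'(\la)$ uniformly in $n$. For this I would choose $r>0$ with $\ov{\Delta(\la,r)}\subset\text{int}(K_p)$; since $p$ is an open map and $\text{int}(K_p)\subset K_p$, the set $\text{int}(K_p)$ is forward invariant, so each $p^n$ maps $\ov{\Delta(\la,r)}$ into the bounded set $K_p$, and Cauchy's estimate delivers $\abs{(p^n)'(\la)}\leq r^{-1}\sup_{z\in K_p}\abs{z}$ independently of $n$.

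The main obstacle is exactly this derivative bound, and it genuinely requires $\la\in\text{int}(K_p)$ rather than merely $\la\in K_p$: at points of $J_p$ the derivatives $(p^n)'$ can blow up, which is precisely the reason matrices of the form $Q[\la]Q^{-1}$ with $\la\in J_p$ may escape $K_P$ and appear in $J_P$ in Theorem \ref{Julia}.
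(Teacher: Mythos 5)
Your proof is correct, and for the substantive part it takes a genuinely different and cleaner route than the paper. The two arguments agree on the easy steps: the inclusion $K_P\subseteq\{M:\text{eigenvalues in }K_p\}$ via the spectral bound $\lVert P^n(M)\rVert\geq\rho(P^n(M))$, and the diagonalizable case via $P^n(Q[\lambda_1,\lambda_2]Q^{-1})=Q[p^n(\lambda_1),p^n(\lambda_2)]Q^{-1}$. Both proofs also reach the same key formula $P^n([\lambda])=p^n(\lambda)\,I+(p^n)'(\lambda)\,N$ for the non-diagonalizable case, reducing everything to a uniform bound on $(p^n)'(\lambda)$ for $\lambda\in\mathrm{int}(K_p)$. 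It is here that the approaches diverge: the paper invokes the classification theorem of Fatou components and runs a three-way case analysis (attracting basin, parabolic basin, Siegel disk), with a separate derivative estimate in each case — the Siegel case in particular requiring the linearizing coordinate $h$ and the identity $(p^{rk})'(\lambda)=e^{ir\theta}h'(\lambda)/h'(p^{rk}(\lambda))$. You instead use Cauchy's estimate once: choose $r$ with $\overline{\Delta(\lambda,r)}\subset K_p$; forward invariance of $K_p$ and its boundedness give $\sup_n\sup_{\overline{\Delta(\lambda,r)}}\lvert p^n\rvert<\infty$, and Cauchy's inequality yields $\lvert(p^n)'(\lambda)\rvert\leq r^{-1}\sup_{K_p}\lvert z\rvert$ for all $n$. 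This is shorter, more elementary, avoids the classification theorem entirely, and treats all Fatou component types uniformly. One small remark: your appeal to openness of $p$ and forward invariance of $\mathrm{int}(K_p)$ is correct but unnecessary — the ordinary forward invariance of $K_p$ together with $\overline{\Delta(\lambda,r)}\subset K_p$ already gives $p^n(\overline{\Delta(\lambda,r)})\subset K_p$ for all $n$.
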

\begin{proof}
For a diagonalizable matrix
\[
M=Q
\left[ {\begin{array}{cc}
\lambda_1 & 0 \\
0 & \lambda_2 \\
\end{array} } \right]Q^{-1},
\] 
we have 
\[
 P^r(M)=Q
  \left[ {\begin{array}{cc}
   p^r(\lambda_1) & 0 \\
   0 & p^r(\lambda_2) \\
  \end{array} } \right]Q^{-1}.
\] 
Clearly, $M\in K_P$ if and only if $\lambda_1, \lambda_2\in K_p$. 
Thus, the orbit of a diagonalizable matrix is bounded under iterations of $P$ if and only if both of its eigenvalues are in $K_p$. Now let
\[
 M=Q
  \left[ {\begin{array}{cc}
   \lambda & 1 \\
   0 & \lambda \\
  \end{array} } \right]Q^{-1},
\] 
a non-diagonalizable matrix, then
\[
 P^r(M)=Q
  \left[ {\begin{array}{cc}
   p^r(\lambda) & (p^r)'(\lambda) \\
   0 & p^r(\lambda) \\
  \end{array} } \right]Q^{-1}.
\]
Thus $\lambda$ must be in $K_p$, in case  $M$ has a bounded orbit. In particular, if $\lambda\in \rm{int}(K_p)$, then $\lambda$ belongs to some Fatou component of the polynomial $p$ in $\mathbb{C}$. By the classification theorem (\cite{MNTU}), several cases may arise.  

\medskip 
\no 
{\it{Case $1$}:} Let $\lambda\in U$ where $U$ is a Fatou component  of $p$ containing an attracting periodic point $\lambda_0$ of order $k$. Then for all $z\in U$,
\[
p^{rk}(z)\rightarrow \lambda_0
\]
as $r\rightarrow \infty$. Further, there exists a neighborhood $U_\lambda\subset \subset U$ of $\lambda$ such that the sequence $\{p^{rk}\}$ converges uniformly to $\lambda_0$ as $r\rightarrow \infty$. Thus, there exists $M_\lambda>0$ such that $\lvert p^r(\lambda)\rvert < M_\lambda$ for all $r\geq 1$.  

\medskip 
\no 
Note that ${(p^{rk})}'\rightarrow 0$  uniformly on $U_\lambda$ as $r\rightarrow \infty$.
Now for any $rk<s<(r+1)k$, 
\[
{(p^{s})}'(\lambda)=(p^{(s-rk)})'(p^{rk}(\lambda))(p^{rk})'(\lambda)
\]
and since $p^{rk}(\lambda)\rightarrow \lambda_0$, there exists a uniform bound $\tilde{M}_\lambda>0$ such that $\lvert(p^{(s-rk)})'(p^{rk}(\lambda))\rvert<\tilde{M}_\lambda$ for each $r\geq 1$. Thus, combining these two facts, it follows that $(p^s)'(\lambda)\rightarrow 0$ as $s\rightarrow \infty$. Therefore, in this case, $M\in K_P$.

\medskip
\no
{\it{Case $2$}:} Let $U$ be the immediate basin for some petal $\mathcal{P}$ of a parabolic periodic point $\lambda_0$ of period $k\geq 1$. Let $\lambda\in U$, then there exists a small neighborhood $U_\lambda$ of $\lambda$ in $U$ such that $p^{rk}(z)\in \mathcal{P}$ for all $z\in U_\lambda$ and for all $r\geq r_0$. Further, the sequence $\{p^{rk}\}$ converges to $\lambda_0$ uniformly on $U_\lambda$. So $\{(p^{rk})'\}$ converges to $0$ uniformly on $U_\lambda$. Thus, implementing a similar set of arguments as in the previous case, it follows that  $M\in K_P$.

\medskip
\no
{\it{Case $3$}:} Let $U$ be a Siegel disk. Then $U$ is conformally equivalent to the disk $\mathbb{D}$ and there exists a conformal map $h$ such that
\[
h\circ p^k \circ h^{-1}(z)=e^{i\theta} z
\]          
in $\mathbb{D}$ for some $\theta\in [0,2\pi)$. Thus, 
\[
(p^{rk})'(\lambda)=e^{ir\theta} \frac{h'(\lambda)}{h'(p^{rk}(\lambda))}.
\] 
If for some fixed $\lambda\in U$, $p^{rk}(\lambda)\rightarrow \partial U$ as $r\rightarrow \infty$, then $h(p^{rk}(\lambda))\rightarrow \partial \mathbb{D}$ as $r\rightarrow \infty$. Now since $h(p^{rk}(\lambda))=e^{ir\theta}h(\lambda)$ for all $r\geq 1$, we have $\overline{\{p^{rk}(\lambda)\}}\subset \subset U$. Thus for a fixed $\lambda\in U$, there exists $M_\lambda>1$ such that
\[
\lvert (p^{rk})'(\lambda) \rvert < M_\lambda
\]
for all $r\geq 1$. Now using similar arguments as in the previous cases, we can conclude that there exists a uniform bound $\tilde{M}_\lambda>1$ such that $\lvert p^r(\lambda)\rvert, \lvert(p^r)'(\lambda)\rvert<\tilde{M}_\lambda$ for each $r\geq 1$. Thus in this case, $M\in K_P$. This completes the proof of the first inclusion.

To prove the other inclusion, note that if any of the eigenvalues of a matrix $M$ is in $F_\infty(p)$, then $\lVert P^n(M)\rVert\rightarrow \infty$ as $n\rightarrow \infty$, since the norm of a matrix dominates its spectral radius.
\end{proof}

\begin{rem}
Consider the map $P:M\mapsto M^2$ in $\mathcal{M}(2,\mathbb{C})$. Then for $\lambda\in \partial K_p=\{z\in \mathbb{C}:\lvert z\rvert=1\}$ where $p(z)=z^2$, the orbit of the matrix $[\lambda]$ under the iterations of $P$ is not bounded.  On the contrary,  a parabolic fixed point $\lambda$ of any arbitrary polynomial $p$ always lie in $J_p$ and the matrix $[\lambda]$ has always bounded orbit under the iterations of the corresponding matrix-valued polynomial $P$.    	
\end{rem}
Note that $\overline{K_P}=\{M: \text{The eigenvalues of $M$ lie in $K_p$}\}$. 
\begin{prop}\label{Fatou}
Let $p$ be a polynomial in $\mathbb{C}$ with $\deg(p)\geq 2$ and $P$ be the corresponding matrix-valued polynomial in $\mathcal{M}(2,\mathbb{C})$, then
$$
\partial \overline{K_P}=\{Q[\lambda_1,\lambda_2]Q^{-1}: (\lambda_1,\lambda_2)\in (K_p\times J_p)\cup (J_p\times K_p), Q\in {\rm{GL}}(2,\mathbb{C})\}.
$$
\end{prop}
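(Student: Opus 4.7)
The plan is to combine the characterization $\overline{K_P}=\{M:\text{the eigenvalues of }M\text{ lie in }K_p\}$, just recorded above, with the continuity of eigenvalues as functions of the matrix entries; this quickly gives the interior of $\overline{K_P}$, and the boundary is read off by taking complements. The final step is to translate the resulting eigenvalue condition through the Jordan canonical form.

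First I would show that $\mathrm{int}(\overline{K_P})=\{M:\text{both eigenvalues of }M\text{ lie in }\mathrm{int}(K_p)\}$. The inclusion $\supseteq$ is immediate: any matrix sufficiently close to one whose eigenvalues both lie in the open set $\mathrm{int}(K_p)$ still has both eigenvalues in $\mathrm{int}(K_p)\subseteq K_p$, hence in $\overline{K_P}$. For the reverse inclusion, suppose $M_0$ has an eigenvalue $\lambda_0\in J_p=\partial K_p$; then every neighborhood of $\lambda_0$ in $\mathbb{C}$ meets $F_\infty(p)$. Passing to a Jordan form of $M_0$ and perturbing the corresponding diagonal entry by a tiny amount pointing into $F_\infty(p)$ produces matrices arbitrarily close to $M_0$ with an eigenvalue in $F_\infty(p)$, and these lie outside $\overline{K_P}$.

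Taking complements in $\overline{K_P}$ then gives
\[
\partial\overline{K_P}=\{M:\text{the eigenvalues of }M\text{ lie in }K_p,\text{ with at least one on }J_p\}.
\]
The Jordan canonical form decomposes this set according to whether $M$ is diagonalizable. A diagonalizable $M=Q[\lambda_1,\lambda_2]Q^{-1}$ satisfies the condition exactly when $(\lambda_1,\lambda_2)\in(K_p\times J_p)\cup(J_p\times K_p)$, yielding the displayed right-hand side; a non-diagonalizable $M=Q[\lambda]Q^{-1}$ has both eigenvalues equal to $\lambda$, and the condition reduces to $\lambda\in J_p$.

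The one delicate point is this non-diagonalizable case: such a Jordan block is genuinely in $\partial\overline{K_P}$ but is not itself of the form $Q'[\lambda_1,\lambda_2]{Q'}^{-1}$. The reconciliation is that it lies in the \emph{closure} of the set displayed in the proposition; concretely, since $J_p$ is perfect one can pick $\lambda_n\in J_p$ with $\lambda_n\to\lambda$ and $\lambda_n\neq\lambda$, and then $Q\bigl(\begin{smallmatrix}\lambda & 1\\ 0 & \lambda_n\end{smallmatrix}\bigr)Q^{-1}$ is a diagonalizable approximant whose eigenvalue pair $(\lambda,\lambda_n)\in J_p\times J_p$ satisfies the required condition and which tends to $Q[\lambda]Q^{-1}$. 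This is exactly the place where the extra summand $\{Q[\lambda]Q^{-1}:\lambda\in J_p\}$ of Theorem \ref{Julia} makes its appearance.
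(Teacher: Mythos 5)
Your determination of $\mathrm{int}(\overline{K_P})$ and the consequent identification
\[
\partial\overline{K_P}=\{M:\text{both eigenvalues of }M\text{ lie in }K_p,\text{ at least one on }J_p\}
\]
are correct, and the route via continuity of the spectrum is tidier than the paper's, which argues by contradiction starting from $M\in\partial\overline{K_P}$ and never records the interior characterization explicitly; the two arguments are nevertheless essentially the same.

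Where the proposal falters is the last paragraph. You are right that $Q[\lambda]Q^{-1}$ with $\lambda\in J_p$ lies in $\partial\overline{K_P}$ yet is not of the form $Q'[\lambda_1,\lambda_2]Q'^{-1}$: when $\lambda_1=\lambda_2$ the displayed right-hand side contributes only scalar matrices $\lambda I$, never a nontrivial Jordan block. But observing that such a block lies in the \emph{closure} of the displayed set does not establish the stated equality, which is an equality of sets, not of a set with a closure. The honest conclusion from your own computation is that the proposition as printed is incomplete: the right-hand side must carry the extra summand $\{Q[\lambda]Q^{-1}:\lambda\in J_p,\ Q\in{\rm GL}(2,\mathbb{C})\}$, precisely the summand that appears in Theorem~\ref{Julia}. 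The paper's proof has the same blind spot — it only excludes from the boundary those matrices (diagonalizable or not) with both eigenvalues in ${\rm int}(K_p)$, then declares the reverse inclusion ``immediate,'' silently passing over the nondiagonalizable boundary points. State the corrected right-hand side and your first two paragraphs already prove it; the third paragraph should be dropped rather than framed as a reconciliation.
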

\begin{proof}
Let $M\in \partial \overline{K_P}$, this implies $M\notin \rm{int}(\overline{K_P})$ and thus $M\notin \rm{int}({K_P})$. Let $M$ be a diagonalizable matrix with both of the eigenvalues
 $\lambda_1, \lambda_2\in \rm{int}(K_p)$, then there exists a small neighborhood $\mathcal{N}_M$ of $M$ such that the eigenvalues of the matrices which lie in $\mathcal{N}_M$ belong to ${\rm{int}}(K_p)$. Thus any $W \in \mathcal{N}_M $ has a bounded orbit under iterations of $P$. Applying a similar set of arguments for the non-diagonalizable matrices with eigenvalues in $\rm{int}(K_p)$,  we conclude that
$$
\partial \overline{K_P}\subseteq \{Q[\lambda_1,\lambda_2]Q^{-1}: (\lambda_1,\lambda_2)\in (K_p\times J_p)\cup (J_p\times K_p), Q\in {\rm{GL}}(2,\mathbb{C})\}.
$$
The containment in the other direction is immediate. Hence the proof follows.
\end{proof}

\begin{rem}
For a polynomial $p$ in $\mathbb{C}$, the filled Julia set $K_p$ is always closed but note that  $K_P$ might not be closed in general.
 
\end{rem}

\section{Julia sets of  matrix-valued polynomials}
\begin{prop}
Let $P$ be the matrix-valued polynomial corresponding to a polynomial $p$ in $\mathbb{C}$ with $\deg(p)\geq 2$. Then a matrix $M\in F_P$ if and only if $QMQ^{-1}\in F_P$ for all $Q\in {\rm{GL}}(2,\mathbb{C})$. Hence, a matrix $M\in J_P$ if and only if $QMQ^{-1}\in J_P$ for all $Q\in {\rm{GL}}(2,\mathbb{C})$.
\end{prop}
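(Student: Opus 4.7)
The plan is to exploit the fact that $P$ respects conjugation, which immediately gives a commutation relation between the iterates of $P$ and the conjugation map. Concretely, for $Q\in {\rm{GL}}(2,\mathbb{C})$, let $\psi_Q:\mathcal{M}(2,\mathbb{C})\to \mathcal{M}(2,\mathbb{C})$ denote the map $N\mapsto QNQ^{-1}$. This is a $\mathbb{C}$-linear automorphism of $\mathcal{M}(2,\mathbb{C})$, hence a biholomorphism of $\mathbb{C}^4$, with inverse $\psi_{Q^{-1}}$. The hypothesis that $P$ is of the form \eqref{poly} yields $P\circ\psi_Q=\psi_Q\circ P$, and by induction $P^n\circ\psi_Q=\psi_Q\circ P^n$ for every $n\geq 1$.

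Next I would use this commutation relation to transfer normality. Suppose $M\in F_P$, so that there is an open neighborhood $U$ of $M$ on which $\{P^n\}$ is a normal family. Since $\psi_Q$ is a biholomorphism, $V:=\psi_Q(U)$ is an open neighborhood of $QMQ^{-1}$. Given any subsequence $\{P^{n_k}\}$, I would restrict to $U$, extract (by normality) a sub-subsequence converging locally uniformly on $U$ to a holomorphic limit (or diverging locally uniformly to $\infty$), and then pre-compose with $\psi_Q^{-1}|_V$ and post-compose with $\psi_Q$ to obtain the corresponding locally uniform convergence of $\{P^{n_k}\}|_V=\psi_Q\circ\{P^{n_k}|_U\}\circ\psi_Q^{-1}|_V$ on $V$. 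Since $\psi_Q$ and $\psi_Q^{-1}$ are continuous linear maps on $\mathbb{C}^4$ (with operator norms independent of $k$), both the holomorphic-convergence and the divergence-to-$\infty$ cases are preserved. Hence $\{P^n\}$ is normal on $V$, so $QMQ^{-1}\in F_P$.

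The reverse implication is obtained by applying the same argument with $Q$ replaced by $Q^{-1}$, giving the characterization $M\in F_P\iff QMQ^{-1}\in F_P$ for all $Q\in {\rm{GL}}(2,\mathbb{C})$. The statement for $J_P$ then follows at once, since $J_P=\mathcal{M}(2,\mathbb{C})\setminus F_P$ and conjugation by $Q$ is a bijection of $\mathcal{M}(2,\mathbb{C})$.

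The argument is essentially soft and no real obstacle is expected; the only mild care needed is to articulate normality in a way compatible with the higher-dimensional setting (allowing divergence to $\infty$ in the norm on $\mathcal{M}(2,\mathbb{C})$), and to note that the linearity and continuity of $\psi_Q$ make the transfer of both types of convergence automatic. Everything else is a direct consequence of the functional identity $P^n\circ\psi_Q=\psi_Q\circ P^n$.
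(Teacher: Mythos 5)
Your proof is correct and follows essentially the same route as the paper's: both exploit the commutation $P^n\circ\psi_Q=\psi_Q\circ P^n$ and the fact that conjugation by $Q$ is a linear biholomorphism of $\mathbb{C}^4$ to transfer normality (including divergence to infinity) from a neighborhood of $M$ to its conjugate image. Your write-up is in fact slightly more careful about phrasing the normality transfer in terms of subsequences, whereas the paper abbreviates this with a ``without loss of generality,'' but the substance is the same.
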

\begin{proof}
Let $M\in F_P$ and $\mathcal{N}_M$ be a neighborhood of $M$ where, without loss of generality, the sequence $\{P^n\}$ converges uniformly to a holomorphic function or the sequence $\{P^n\}$ diverges uniformly to infinity. Now consider the matrix $QMQ^{-1}$ for some $Q\in {\rm{GL}}(2,\mathbb{C})$ and consider the open neighborhood $\mathcal{N}_M^Q=\{QXQ^{-1}: X\in \mathcal{N}_M\}$ of $QMQ^{-1}$. It is easy to see that if the sequence $\{P^n\}$ converges uniformly to a holomorphic function $R$ or diverges uniformly to infinity in $\mathcal{N}_M$, then the sequence $\{P^n\}$ converges uniformly to the holomorphic function $QRQ^{-1}$ or diverges uniformly to infinity in $\mathcal{N}_M^Q$, respectively. Thus $QMQ^{-1}\in F_P$. This finishes the proof. 
\end{proof}
\subsection{Proof of Theorem \ref{Julia}}
If any one of the eigenvalues of a matrix $M\in \mathcal{M}(2,\mathbb{C})$ is in $F_\infty(p)$, then there exists a neighborhood $\mathcal{N}_M$ of $M$ such that the sequence $\{P^n\}$ diverges uniformly  in $\mathcal{N}_M$. Thus,  $J_P\subset \overline{K_P}$. 
 
During the course of the proof, we assume that if we start with a matrix $M_0\in \mathcal{M}(2, \mathbb{C})$ 
with eigenvalues $\lambda_{M_0}, \mu_{M_0} \in F_p$, then the corresponding Fatou components containing these eigenvalues are remained fixed by the polynomial $p$ in $\mathbb{C}$.  If this is not the case, then there exists $k\geq 2$ such that both the components are fixed by $p^k$. Then, instead of working with the sequence $\{P^j\}_{j\geq 1}$, we work with $\{P^{kj}\}_{j\geq 1}$.  As we shall see, this assumption is harmless since the sequence $\{P^j\}_{j\geq 1}$ is normal if and only if the sequence $\{P^{kj}\}_{j\geq 1}$ is normal for any $k\geq 1$.

\medskip
\no
{\it{Case 1:}} 
Let \[
M_0=
Q_0\left[\begin{array}{cc}
\lambda_{M_0} & 0\\
0 & \mu_{M_0}
\end{array}
\right]Q_0^{-1}
\]
such that $\lambda_{M_0}$ and $\mu_{M_0}$ are in  ${\rm{int}}(K_p)$ and $\lambda_{M_0}\neq \mu_{M_0}$. Further, we assume that each of these components is either attracting or parabolic
and the points belonging to the Fatou components mentioned above are attracted towards the fixed points $\lambda$ and $\mu$, respectively (note that $\lambda$ and $\mu$ can be the same!).

Since, $\lambda_{M_0}\neq \mu_{M_0}$, there exists a small neighborhood $\mathcal{N}_{M_0}$ of $M_0$ such that each $M\in \mathcal{N}_{M_0}$ is of the form
 \[
Q_M\left[\begin{array}{cc}
\lambda_{M} & 0\\
0 & \mu_{M}
\end{array}
\right]Q_M^{-1}
\] 
with $\lambda_M\neq \mu_M$ and $\lambda_M, \mu_M$ are in the same component of ${\rm{int}(K_p)}$ as of $\lambda_{M_0}, \mu_{M_0}$, respectively. 

Using Theorem 3.1 in \cite{AK} in present set-up, the family $\mathcal{F}={\{P^n\}}_{n\geq 1}$ is not normal in $\mathcal{N}_{M_0}$ if and only if there exist a compact set $K_0\subset \subset \mathcal{N}_{M_0}$, a sequence of matrices $\{M_{j_k}\}$, a sequence of real numbers $\{\rho_{j_k}\}$ with $\rho_{j_k}>0$ and $\rho_{j_k}\rightarrow 0^+$ and a sequence of Euclidean unit vectors $\{\xi_{j_k}\}$ in $\mathbb{C}^4$ such that the sequence of entire functions
\[
g_k(\zeta)=P^{j_k}(M_{j_k}+\rho_{j_k} \xi_{j_k} \zeta), \ \ \zeta\in \mathbb{C}
\] 
converges uniformly on compact subset of $\mathbb{C}$ to a non-constant entire function $g$ as $k\rightarrow \infty$. Assume that the sequence $\{P^n\}$ is not normal in $\mathcal{N}_{M_0}$ and $M_{j_k}\rightarrow M\in \mathcal{N}_{M_0}$ as $k\rightarrow \infty$, then for all $\zeta\in \mathbb{C}$,
\begin{equation}\label{covg}
M_{j_k}+\rho_{j_k} \xi_{j_k} \zeta \rightarrow M
\end{equation}
as $k\rightarrow \infty$. Let 
\[
M=Q_M\left[\begin{array}{cc}
\lambda_{M} & 0\\
0 & \mu_{M}
\end{array}
\right]Q_{M}^{-1}
\]
with $\lambda_M \neq \mu_M$.

Fix any $\zeta\in \mathbb{C}$. Now we can choose a sequence of matrices $\{Q_{j_k}^\zeta\}$ such that
\begin{eqnarray}\label{matrix}
M_{j_k}+\rho_{j_k} \xi_{j_k} \zeta &=& Q_{j_k}^\zeta \left[\begin{array}{cc}
\lambda_{j_k}^\zeta & 0\\
0 & \mu_{j_k}^\zeta 
\end{array}
\right]{Q_{j_k}^\zeta}^{-1} \nonumber \\
&=&\left[\begin{array}{cc}
\lambda_{j_k}^\zeta+\frac{b_{j_k}^\zeta c_{j_k}^\zeta}{{\rm{det}}Q_{j_k}^\zeta}(\lambda_{j_k}^\zeta-\mu_{j_k}^\zeta) & \frac{a_{j_k}^\zeta b_{j_k}^\zeta}{{\rm{det}}Q_{j_k}^\zeta}(\lambda_{j_k}^\zeta- \mu_{j_k}^\zeta) \\
\frac{c_{j_k}^\zeta d_{j_k}^\zeta}{{\rm{det}}Q_{j_k}^\zeta}(\lambda_{j_k}^\zeta-\mu_{j_k}^\zeta) & \mu_{j_k}^\zeta+\frac{b_{j_k}^\zeta c_{j_k}^\zeta}{{\rm{det}}Q_{j_k}^\zeta}(\mu_{j_k}^\zeta-\lambda_{j_k}^\zeta)
\end{array}
\right]
\end{eqnarray}
where 
\[
Q_{j_k}^\zeta=\left[\begin{array}{cc}
a_{j_k}^\zeta & b_{j_k}^\zeta\\
c_{j_k}^\zeta & d_{j_k}^\zeta 
\end{array}
\right].
\]
Note that for each $\zeta\in \mathbb{C}$,   $\lambda_{j_k}^\zeta\neq \mu_{j_k}^\zeta$ for all  $k\geq k_\zeta>1$ sufficiently large. Further, $\lambda_{j_k}^\zeta\rightarrow \lambda_M$, $\mu_{j_k}^\zeta\rightarrow \mu_M$ as $k\rightarrow \infty$. Also using (\ref{covg}) and (\ref{matrix}),  it follows that there exists a constant $L>1$, independent of $\zeta$, such that 
\begin{equation}\label{bound1}
\left \lvert \frac{b_{j_k}^\zeta c_{j_k}^\zeta}{{\rm{det}}Q_{j_k}^\zeta}\right\rvert, 
\left \lvert \frac{a_{j_k}^\zeta b_{j_k}^\zeta}{{\rm{det}}Q_{j_k}^\zeta} \right \rvert,
\left \lvert \frac{c_{j_k}^\zeta d_{j_k}^\zeta}{{\rm{det}}Q_{j_k}^\zeta} \right \rvert < L
\end{equation}
for all $k\geq k_\zeta$. 
 
Now
\[
P^{j_k}(M_{j_k}+\rho_{j_k} \xi_{j_k} \zeta)=Q_{j_k}^\zeta \left[\begin{array}{cc}
p^{j_k}(\lambda_{j_k}^\zeta) & 0\\
0 & p^{j_k}(\mu_{j_k}^\zeta)
\end{array}
\right]{Q_{j_k}^\zeta }^{-1}
\]
and $p^{j_k}(\lambda_{j_k}^\zeta) \rightarrow \lambda$ and $p^{j_k}(\mu_{j_k}^\zeta)\rightarrow \mu$  as $k\rightarrow \infty$. This happens because for a fixed $\zeta$, each of $M_{j_k}+\rho_{j_k} \xi_{j_k} \zeta$ lie in a compact subset of $\mathcal{N}_{M_0}$ for sufficiently large $k$. Therefore, there are compact sets in the corresponding Fatou components containing $\lambda_{j_k}^\zeta$ and $\mu_{j_k}^\zeta$ for all $k$'s sufficiently large. Since,
\[
g(\zeta)=\lim_{k\rightarrow \infty} Q_{j_k}^\zeta \left[\begin{array}{cc}
p^{j_k}(\lambda_{j_k}^\zeta) & 0\\
0 & p^{j_k}(\mu_{j_k}^\zeta)
\end{array}
\right]{Q_{j_k}^\zeta}^{-1}
\]
\[
=\lim_{k\rightarrow \infty}\left[\begin{array}{cc}
p^{j_k}(\lambda_{j_k}^\zeta)+\frac{b_{j_k}^\zeta c_{j_k}^\zeta}{{\rm{det}}Q_{j_k}^\zeta}(p^{j_k}(\lambda_{j_k}^\zeta)-p^{j_k}(\mu_{j_k}^\zeta)) & \frac{a_{j_k}^\zeta b_{j_k}^\zeta}{{\rm{det}}Q_{j_k}^\zeta}(p^{j_k}(\lambda_{j_k}^\zeta)- p^{j_k}(\mu_{j_k}^\zeta)) \\
\frac{c_{j_k}^\zeta d_{j_k}^\zeta}{{\rm{det}}Q_{j_k}^\zeta}(p^{j_k}(\lambda_{j_k}^\zeta)-p^{j_k}(\mu_{j_k}^\zeta)) & p^{j_k}(\mu_{j_k}^\zeta)+\frac{b_{j_k}^\zeta c_{j_k}^\zeta}{{\rm{det}}Q_{j_k}^\zeta}(p^{j_k}(\mu_{j_k}^\zeta)-p^{j_k}(\lambda_{j_k}^\zeta))
\end{array}
\right],
\]
by (\ref{bound1}), it follows that there exists $K>1$ such that $\lVert g(\zeta)\rVert < K$ for all $\zeta\in \mathbb{C}$. Thus $g$ is identically constant in $\mathbb{C}$ which in turn gives that $\{P^n\}$ is a normal family in $\mathcal{N}_{M_0}$. 

\medskip
\no 
{\it{Case 2:}} As in the previous case, let us start with a matrix  $M_0$
such that its eigenvalues $\lambda_{M_0}$ and $\mu_{M_0}$  lie in two different components $U$ and $V$ of ${\rm{int}}(K_p)$. In this case, we assume one of these components ($U$, say) is either parabolic or an attracting one and the other component ($V$) a Siegel disk, both are fixed by the polynomial $p$. Clearly, $\lambda_{M_0}\neq \mu_{M_0}$ and we can choose a small neighborhood $\mathcal{N}_{M_0}$ of $M_0$ as before such that for all matrices $N \in \mathcal{N}_{M_0}$, the eigenvalues $\lambda_N, \mu_N$ are in the same components of $\rm{int}(K_p)$ as of $\lambda_{M_0}$, $\mu_{M_0}$. 
 
Let $M_{j_k}\rightarrow M$ as $k\rightarrow \infty$ (choice of $M_{j_k}$'s follows the same recipe as in the previous case) and consequently,  for all $\zeta\in \mathbb{C}$,
\[
M_{j_k}+\rho_{j_k} \xi_{j_k} \zeta\rightarrow M=Q_M\left[\begin{array}{cc}
\lambda_M & 0\\
0 & \mu_M
\end{array}
\right]Q_{M}^{-1}\in \mathcal{N}_{M_0}
\]   
as $k\rightarrow \infty$.
Clearly, $M$ can be diagonalized in such a way that $\lambda_M\in U$ and $\mu_M\in V$.
Using a same sort of arguments as in the previous case, it follows that 
$$
\lim_{k\rightarrow \infty}p^{j_k}(\lambda_{j_k}^\zeta)=\lim_{k\rightarrow \infty}p^{j_k}(\lambda_M)=\lambda_0
$$
and since $\lim_{k\rightarrow \infty} \mu_{j_k}^\zeta=\mu_M$ for all $\zeta\in \mathbb{C}$, there exists a subsequence 
$\{j_{k'}\}_{k'\geq 1}\subseteq \{j_k\}_{k\geq 1}$ (depending on $\zeta$), with $j_{k'} \rightarrow \infty$ as $k'\rightarrow \infty$, such that
$$
\lim_{k'\rightarrow \infty} p^{j_{k'}}(\mu_{j_{k'}}^\zeta)=\lim_{k'\rightarrow \infty} p^{j_{k'}}(\mu_M)=\mu_0
$$
for all $\zeta\in \mathbb{C}$. Since $\mu_0$ lies in the corresponding Siegel disk, $\lambda_0\neq \mu_0$.  Hence using a same sort of argument as in the Case $1$, we can establish that $\{P^n\}$ is a normal family in $\mathcal{N}_{M_0}$.

\medskip
\no 
{\it{Case 3:}}
Using the same notations as before, let $\lambda_{M_0}$ and $\mu_{M_0}$ both are in the Siegel disk (they might be in the same Siegel disk).  

\no
{\it{Subcase 1:}}
Let $\lambda_{M_0}\neq \mu_{M_0}$. As before, let
\[
\lim_{k\rightarrow \infty} \lambda_{j_k}^\zeta=\lambda_M
\text{ and } 
\lim_{k\rightarrow \infty} \mu_{j_k}^\zeta=\mu_M
\]
with $\lambda_M \neq \mu_M$.
Therefore there exists a subsequence $\{j_{k'}\}_{k'\geq 1}\subseteq \{j_k\}_{k\geq 1}$ (depending on $\zeta$) such that
$$
\lim_{k'\rightarrow \infty}p^{j_{k'}}(\lambda_{j_{k'}}^\zeta)
=\lim_{k'\rightarrow \infty} p^{j_{k'}}(\lambda_M)=\lambda_0
\text{ and } 
\lim_{k'\rightarrow \infty}p^{j_{k'}}(\mu_{j_{k'}}^\zeta)
=\lim_{k'\rightarrow \infty} p^{j_{k'}}(\mu_M)=\mu_0. 
$$
Since $\lambda_M \neq \mu_M$, $p^{j_{k'}}(\lambda_M)$ and $p^{j_{k'}}(\mu_M)$ can not come arbitrarily close. Thus $\lambda_0 \neq \mu_0$. Now using a same sort of argument as in the previous cases, we have that the family $\{P^n\}$ is normal in $\mathcal{N}_{M_0}$.

\no 
{\it{Subcase 2:}} Next we assume that $\lambda_{M_0}=\mu_{M_0}$. Following the same notations as before,  for all $\zeta \in \mathbb{C}$,
\[
M_{j_k}+\rho_{j_k}\xi_{j_k}\zeta\rightarrow M
\] 
as $k\rightarrow \infty$. 
Let $\mathbf{\lambda_M=\mu_M}$. 

Now let $\zeta\in \mathbb{C}$ is such that $(M_{j_k}+\rho_{j_k}\xi_{j_k}\zeta)$ is diagonalizable for infinitely many $k\in \mathbb{N}$ , then
\[
M_{j_k}+\rho_{j_k} \xi_{j_k} \zeta =Q_{j_k,\zeta}\left[\begin{array}{cc}
\lambda_{j_k,\zeta} & 0\\
0 & \mu_{j_k,\zeta}
\end{array}
\right]Q_{j_k,\zeta}^{-1}
\]
with $\lambda_{j_k,\zeta}, \mu_{j_k,\zeta}\rightarrow \lambda_M=\mu_M$ as $k\rightarrow \infty$.  As described in previous case, there exists a subsequence $\{j_{k'}\}\subseteq \{j_k\}$ such that
\[
\lambda=\lim_{k'\rightarrow \infty}p^{j_{k'}}(\lambda_{j_{k'},\zeta})=\lim_{k'\rightarrow \infty} p^{j_{k'}} (\mu_{j_{k'},\zeta})=\lim_{k'\rightarrow \infty} p^{j_{k'}}(\lambda_M)=\lambda.
\] 
Therefore,
\[
g(\zeta)=\lim_{k'\rightarrow \infty}P^{j_{k'}}(M_{j_{k'}}+\rho_{j_{k'}} \xi_{j_{k'}} \zeta)=\lambda {\rm{Id}}.
\]
Now let $\zeta\in \mathbb{C}$ be such that for which  $M_{j_k}+\rho_{j_k} \xi_{j_k} \zeta$ is non-diagonalizable for all $k\geq k_\zeta$.
For such a $\zeta\in \mathbb{C}$,
\[
M_{j_k}+\rho_{j_k} \xi_{j_k} \zeta =Q_{j_k}^\zeta\left[\begin{array}{cc}
\lambda_{j_k,\zeta}& A_{{j_k},\zeta}\\
0 & \lambda_{j_k,\zeta}
\end{array}
\right]{Q_{j_k}^\zeta}^{-1}.
\]
for all $k\geq k_\zeta$ with $\lambda_{j_k,\zeta}\rightarrow \lambda_M$ as $k\rightarrow \infty$. 
Now 
\[
Q_{j_k}^\zeta \left[\begin{array}{cc}
\lambda_{j_k,\zeta}& A_{{j_k},\zeta}\\
0 & \lambda_{j_k,\zeta}
\end{array}
\right]{Q_{j_k}^\zeta}^{-1}
= \left[\begin{array}{cc}
\lambda_{j_k,\zeta}-\frac{a_{j_k}^\zeta c_{j_k}^\zeta}{\det Q_{j_k}^\zeta} A_{j_k,\zeta} & \frac{{(a_{j_k}^\zeta)}^2}{\det Q_{j_k}^\zeta} A_{{j_k},\zeta}\\
-\frac{{(c_{j_k}^\zeta)}^2}{\det Q_{j_k}^\zeta} A_{{j_k},\zeta} & \lambda_{j_k,\zeta}+\frac{a_{j_k}^\zeta c_{j_k}^\zeta}{\det Q_{j_k}^\zeta} A_{j_k,\zeta}
\end{array}
\right].
\]
Now since for each $\zeta\in \mathbb{C}$, $(M_{j_k}+\rho_{j_k} \xi_{j_k}\zeta)\rightarrow M$ and $\lambda_{j_k,\zeta}\rightarrow \lambda_M$ as $k\rightarrow \infty$ , it follows that for each $\zeta$, there exists $k_{\zeta}'$ and an $L>1$ (not depending on $\zeta$) such that
\begin{equation}
\left\lvert \frac{a_{j_k}^\zeta c_{j_k}^\zeta}{\det Q_{j_k}^\zeta} A_{j_k,\zeta}\right\rvert, \left\lvert\frac{{(a_{j_k}^\zeta)}^2}{\det Q_{j_k}^\zeta} A_{{j_k},\zeta}\right\rvert, \left\lvert\frac{{(c_{j_k}^\zeta)}^2}{\det Q_{j_k}^\zeta} A_{{j_k},\zeta}\right\rvert < L 
\end{equation}
for all $k\geq k_\zeta'$.  
Now 
\begin{eqnarray*}
g(\zeta)&=&\lim_{k\rightarrow \infty} P^{j_k} \left(M_{j_k}+\rho_{j_k}\xi_{j_k} \zeta\right)\\
&=& \lim_{k\rightarrow \infty} Q_{j_k}^\zeta\left[\begin{array}{cc}
p^{j_k}(\lambda_{j_k,\zeta})& A_{{j_k},\zeta}(p^{j_k})'(\lambda_{j_k,\zeta})\\
0 & p^{j_k}(\lambda_{j_k,\zeta})
\end{array}
\right]{Q_{j_k}^\zeta}^{-1}\\
&=& \lim_{k\rightarrow \infty}\left[\begin{array}{cc}
p^{j_k}(\lambda_{j_k,\zeta})-\frac{a_{j_k}^\zeta c_{j_k}^\zeta }{\det Q_{j_k}^\zeta} A_{j_k,\zeta}(p^{j_k})'(\lambda_{j_k,\zeta}) & \frac{{(a_{j_k}^\zeta)}^2}{\det Q_{j_k}^\zeta} A_{{j_k},\zeta}(p^{j_k})'(\lambda_{j_k,\zeta})\\
-\frac{{(c_{j_k}^\zeta)}^2}{\det Q_{j_k}^\zeta} A_{{j_k},\zeta}(p^{j_k})'(\lambda_{j_k,\zeta}) & p^{j_k}(\lambda_{j_k,\zeta})+\frac{a_{j_k}^\zeta c_{j_k}^\zeta}{\det Q_{j_k}^\zeta} A_{j_k,\zeta}(p^{j_k})'(\lambda_{j_k,\zeta})
\end{array}
\right].
\end{eqnarray*}
Since for each $\zeta\in \mathbb{C}$, $\lambda_{j_k,\zeta}\rightarrow \lambda_M$,  it follows that $\lvert p^{j_k}(\lambda_{j_k,\zeta})\rvert$  
and $\lvert( p^{j_k})'(\lambda_{j_k,\zeta})\rvert$ are uniformly bounded and the bound is independent of $\zeta$. Thus $g$ turns out to be a bounded entire functionon and hence $g$ becomes a constant function in $\mathbb{C}$, in fact $g\equiv \lambda {\rm{Id}}$ in $\mathbb{C}$, establishing the fact that the family $\{P^n\}$ is normal in $\mathcal{N}_{M_0}$. If $\lambda_M \neq \mu_M$, then as in previous cases, we can show that the family $\{P^n\}$ is normal in $\mathcal{N}_{M_0}$.

Now note that if we start with a matrix with identical eigenvalues such that they both are in the same attracting or in the same parabolic compoment of ${\rm{int}}(K_p)$, then using a similar treatment as in the  Case $3$, we can record the following case.

\medskip 
\no 
{\it{Case 4:}}  For a matirx $M_0\in \mathcal{M}(2,\mathbb{C})$, with two identical eigenvalues sitting either in an attracting or in a parabolic component of ${\rm{int}}(K_p)$, there exists a neighborhood $\mathcal{N}_{M_0}$ containing $M_0$ such that  $\{P^n\}_{n\geq 1}$ forms a normal family therein.   
This finishes the proof.
\subsection{Some properties of Julia sets} 
Now we discuss some fundamental properties of the Julia sets and Fatou sets of the matrix-valued polynomial $P$ which are simply  inherited from the properties of Julia sets and Fatou sets of $p$.  
Let 
\[
J_1=\left \{ Q[\lambda_1, \lambda_2]Q^{-1}: \text{either }(\lambda_1, \lambda_2)\in J_p \times {\rm{int}}(K_p) \text{ or }(\lambda_1, \lambda_2)\in  {\rm{int}}(K_p) \times J_p \right\}
\]
and
\[
J_2=\left \{ Q[\lambda]Q^{-1}:\lambda\in J_p\right\}
\cup 
\left \{ Q[\lambda_1, \lambda_2]Q^{-1}:\lambda_1, \lambda_2\in J_p \right\}.
\]
Note that 
\[
J=J_1\cup J_2.
\]

\medskip 
\no 
The following  string of corollaries are immediate.
\begin{cor}
	$J_1$, $J_2$ both are completely invariant under $P$. Further, $J_2$ is closed but $J_1$ is neither closed nor open. Also, $J=J_1\cup J_2$, $J_1\cap J_2=\emptyset$ and $\partial J_1=J_2$.
\end{cor}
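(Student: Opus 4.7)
The plan is to reduce each assertion to the unordered eigenvalue map $M \mapsto \{\lambda_1(M), \lambda_2(M)\}$, which is continuous in $M$, together with the complete invariance of $J_p$ and $\mathrm{int}(K_p)$ under $p$ from one-variable complex dynamics.

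For complete invariance of $J_1$ and $J_2$, I would use the explicit Jordan-form action $P(Q[\lambda_1,\lambda_2]Q^{-1}) = Q[p(\lambda_1), p(\lambda_2)]Q^{-1}$, together with the fact that $P(Q[\lambda]Q^{-1})$ has both eigenvalues equal to $p(\lambda)$ (it equals $p(\lambda)\,\mathrm{Id}$ when $p'(\lambda) = 0$ and is conjugate to $[p(\lambda)]$ otherwise). Combined with $p^{-1}(J_p) = J_p$ and $p^{-1}(\mathrm{int}(K_p)) = \mathrm{int}(K_p)$, both forward and backward invariance of $J_1$ and $J_2$ follow by tracking how many eigenvalues sit in $J_p$. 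Closedness of $J_2$ is then the statement that $J_2 = \{M : \text{both eigenvalues of } M \text{ lie in } J_p\}$ is the preimage of a closed set under a continuous map.

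For $J_1$, pick $M \in J_1$ with eigenvalues $\lambda_1 \in J_p$, $\lambda_2 \in \mathrm{int}(K_p)$. Perturbing $\lambda_1$ across $J_p = \partial K_p$ into $F_\infty(p)$ yields arbitrarily close matrices in $F_\infty(P) \subseteq \mathcal{M}(2,\mathbb{C})\setminus J_P$, showing $J_1$ is not open. Conversely, taking a sequence in $\mathrm{int}(K_p)$ converging to some $\tilde{\lambda}_2 \in \partial U \subseteq J_p$, where $U$ is a bounded Fatou component of $p$, and pairing it with the fixed eigenvalue $\lambda_1 \in J_p$ produces a sequence in $J_1$ whose limit lies in $J_2$, so $J_1$ is not closed. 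The equality $J = J_1 \cup J_2$ is immediate from Theorem~\ref{Julia} after splitting $(K_p\times J_p)\cup (J_p\times K_p) = (J_p \times J_p) \cup (\mathrm{int}(K_p)\times J_p) \cup (J_p \times \mathrm{int}(K_p))$, and the disjointness $J_1 \cap J_2 = \emptyset$ follows by counting eigenvalues in $J_p$.

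Finally, for $\partial J_1 = J_2$: since $J_1 \subseteq J_P = \partial\overline{K_P}$ has empty interior in $\mathbb{C}^4$ (the boundary of any closed set has empty interior), we get $\partial J_1 = \overline{J_1}$, so the task reduces to identifying $\overline{J_1}$ with $J_1 \cup J_2$. The inclusion $\overline{J_1} \subseteq J_1 \cup J_2$ is by continuity of eigenvalues: a limit of matrices from $J_1$ retains one eigenvalue in $J_p$ and the other in $\overline{\mathrm{int}(K_p)} \subseteq K_p$, so the limit falls in $J_1$ or $J_2$ according to the location of that second eigenvalue. The reverse inclusion $J_2 \subseteq \overline{J_1}$ is the main technical step: for diagonalizable $M = Q[\lambda_1,\lambda_2]Q^{-1} \in J_2$, replace $\lambda_2$ by a nearby point of $\mathrm{int}(K_p)$ using $J_p \subseteq \overline{\mathrm{int}(K_p)}$; for a Jordan block $M = Q[\lambda]Q^{-1}$, first open up the Jordan form into two slightly different eigenvalues close to $\lambda$, then push one of them into $\mathrm{int}(K_p)$. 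This last approximation---using that every point of $J_p$ can be approached from $\mathrm{int}(K_p)$ whenever $p$ has a bounded Fatou component---is the crux of the argument; everything else is continuity of eigenvalues and straightforward Jordan-form bookkeeping.
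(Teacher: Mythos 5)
The paper offers no proof of this corollary, declaring it ``immediate'' from Theorem~\ref{Julia}, so there is nothing to compare against directly. Your arguments for complete invariance (via the Jordan-form action of $P$ and the complete invariance of $J_p$ and $\mathrm{int}(K_p)$ under $p$), for $J=J_1\cup J_2$, for $J_1\cap J_2=\emptyset$, for the closedness of $J_2$, and for $J_1$ being neither open nor closed are all sound, modulo the standing assumption $\mathrm{int}(K_p)\ne\emptyset$ (which the corollary plainly needs: when $J_p$ is a Cantor set, $J_1=\emptyset$ and ``neither closed nor open'' already fails).

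The place where your argument goes wrong is $\partial J_1=J_2$, and your own computation exposes the problem. You correctly observe that $J_1\subseteq J_P=\partial\overline{K_P}$ has empty interior in $\mathbb{C}^4$, hence $\partial J_1=\overline{J_1}$. But you then write that ``the task reduces to identifying $\overline{J_1}$ with $J_1\cup J_2$.'' That is not the task: the task is $\overline{J_1}=J_2$, which, since $J_1\subseteq\overline{J_1}$, would force $J_1\subseteq J_2$, contradicting the disjointness you established one sentence earlier. So in the ambient topology the identity $\partial J_1=J_2$ is incompatible with the other assertions whenever $J_1\ne\emptyset$, and you should have flagged this rather than silently replacing the goal. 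The charitable reading is that $\partial J_1$ means the boundary \emph{inside} $J_P$: since $J_2$ is closed, $J_1=J_P\setminus J_2$ is relatively open in $J_P$, so $\partial_{J_P}J_1=\overline{J_1}\cap J_P\setminus J_1$, and the assertion becomes the density statement $J_2\subseteq\overline{J_1}$.

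Your argument for that density has a second, independent gap. Replacing an eigenvalue $\lambda\in J_p$ by a nearby point of $\mathrm{int}(K_p)$ needs $\lambda\in\overline{\mathrm{int}(K_p)}$, i.e.\ the containment $J_p\subseteq\overline{\mathrm{int}(K_p)}$. This is not automatic: it fails outright when $\mathrm{int}(K_p)=\emptyset$, and even when $\mathrm{int}(K_p)\ne\emptyset$ it is a nontrivial additional property of $p$ (essentially that every point of $J_p$ is accumulated by bounded Fatou components). You invoke it as though it were standard; it is not, and neither the paper nor your proof establishes it. To rescue $\partial J_1=J_2$ one must both reinterpret $\partial$ as the relative boundary in $J_P$ and either prove $J_p\subseteq\overline{\mathrm{int}(K_p)}$ for the class of polynomials considered or add it as a hypothesis.
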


\begin{cor}
The sets $J_1$ and $J_2$ are both infinite upto conjugation. 
\end{cor}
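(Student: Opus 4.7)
The plan is to reduce the question to counting conjugacy classes of Jordan canonical forms and then to invoke the classical fact that the Julia set $J_p$ of any polynomial $p$ of degree at least two is infinite (in fact a perfect set). In $\mathcal{M}(2,\mathbb{C})$ two matrices are conjugate if and only if they share the same Jordan form; hence $Q_1[\lambda_1,\lambda_2]Q_1^{-1}$ and $Q_2[\mu_1,\mu_2]Q_2^{-1}$ are conjugate iff $\{\lambda_1,\lambda_2\}=\{\mu_1,\mu_2\}$ as unordered pairs, while $Q_1[\lambda]Q_1^{-1}$ and $Q_2[\mu]Q_2^{-1}$ are conjugate iff $\lambda=\mu$; moreover a non-diagonalizable Jordan block is never conjugate to a diagonal matrix. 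These elementary observations reduce the two assertions of the corollary to exhibiting infinitely many distinct Jordan types inside $J_1$ and $J_2$, respectively.

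For $J_2$, I would construct an injection from $J_p$ into the set of conjugacy classes of $J_2$ by sending $\lambda \mapsto [\,Q[\lambda]Q^{-1}\,]$. Distinct values of $\lambda$ produce distinct Jordan blocks, hence distinct conjugacy classes, and the infinitude of $J_p$ immediately yields the desired conclusion.

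For $J_1$, I would first fix any point $\lambda_0 \in {\rm{int}}(K_p)$ (the statement tacitly presupposes ${\rm{int}}(K_p)\neq \emptyset$, since otherwise $J_1$ is empty) and consider the one-parameter family of diagonal matrices $[\lambda,\lambda_0]$ indexed by $\lambda\in J_p$. Each such matrix lies in $J_1$ by definition, and two of them $[\lambda,\lambda_0]$, $[\lambda',\lambda_0]$ are conjugate precisely when $\{\lambda,\lambda_0\}=\{\lambda',\lambda_0\}$. Since $J_p$ and ${\rm{int}}(K_p)$ are disjoint (being separated by $J_p=\partial K_p$), one has $\lambda,\lambda'\neq \lambda_0$, so equality of the unordered pairs forces $\lambda=\lambda'$. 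The assignment $\lambda\mapsto [[\lambda,\lambda_0]]$ is therefore an injection from $J_p$ into the conjugacy classes of $J_1$, and the infinitude of $J_p$ again completes the argument.

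No real obstacle arises in this proof: once the Jordan-form description supplied by Theorem \ref{Julia} is in hand, everything reduces to the well-known infinitude (in fact perfectness) of the Julia set $J_p$ for polynomials of degree at least two, a standard fact that can be cited from \cite{MNTU}, and to elementary linear algebra on $2\times 2$ Jordan forms. The only minor point that should be made explicit is the disjointness $J_p \cap {\rm{int}}(K_p)=\emptyset$, so that the sample matrices $[\lambda,\lambda_0]$ chosen above genuinely sit in $J_1$ and not in $J_2$.
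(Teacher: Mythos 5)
Your proof is correct, and since the paper presents this corollary without proof (it appears in a ``string of corollaries'' declared immediate), your argument is essentially the natural one the paper is implicitly invoking: reduce to counting Jordan types, note that conjugacy classes in $\mathcal{M}(2,\mathbb{C})$ are classified by unordered eigenvalue pairs together with diagonalizability, and then feed in the infinitude (indeed perfectness) of $J_p$ for $\deg p \geq 2$.

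One remark deserves emphasis, since you flagged it parenthetically and then moved on: your observation that the argument for $J_1$ requires ${\rm{int}}(K_p)\neq\emptyset$ is not a technical nicety but a genuine hypothesis that the paper omits. For $p(z)=z^2+c$ with $c$ outside the Mandelbrot set, $K_p=J_p$ is a Cantor set and ${\rm{int}}(K_p)=\emptyset$, so $J_1=\emptyset$; the same happens for any $p$ whose filled Julia set has empty interior. Thus the corollary as literally stated is false in general, and ``$J_1$ is infinite up to conjugation'' should be read as holding whenever ${\rm{int}}(K_p)\neq\emptyset$. The assertion about $J_2$, on the other hand, is unconditional: your injection $\lambda\mapsto[Q[\lambda]Q^{-1}]$ from $J_p$ into conjugacy classes of $J_2$ works for every $p$ of degree at least two, and one could equally well use the diagonal matrices $[\lambda_1,\lambda_2]$ with both entries in $J_p$. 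Spelling out the ${\rm{int}}(K_p)\neq\emptyset$ caveat explicitly, rather than tacitly, is the one improvement I would make to your writeup.
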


\begin{cor}
There exists no wandering Fatou component of $P$.
\end{cor}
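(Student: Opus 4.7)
The plan is to reduce the statement to Sullivan's no-wandering theorem for $p$, exploiting Theorem \ref{Julia} to parametrize the Fatou components of $P$ by those of $p$. By Theorem \ref{Julia}, the Fatou set splits as $F_P = F_\infty(P) \sqcup \mathrm{int}(K_P)$, and I would treat the two pieces separately. For $F_\infty(P) = \{M : \text{at least one eigenvalue of } M \text{ lies in } F_\infty(p)\}$, note that $P^{-1}(F_\infty(P)) = F_\infty(P)$ because $p^{-1}(F_\infty(p)) = F_\infty(p)$, so this open set is totally invariant. I would show it is path-connected by first deforming the conjugating matrix to the identity through the connected group $\mathrm{GL}(2,\mathbb{C})$ and then sliding eigenvalues to a common large $\lambda_0 \in F_\infty(p)$: the eigenvalue already in $F_\infty(p)$ can be moved within the connected set $F_\infty(p)$, while the other can be moved freely through $\mathbb{C}$. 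Hence $F_\infty(P)$ is a single, fixed, non-wandering Fatou component.

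For a Fatou component $U \subseteq \mathrm{int}(K_P)$, the key step is to show that $U$ is determined by an unordered pair $\{V_1, V_2\}$ of Fatou components of $p$. Since the unordered eigenvalue pair of $M$ depends continuously on $M$ (via the trace and determinant) and the Fatou components of $p$ are disjoint open sets partitioning $\mathrm{int}(K_p)$, this pair is locally constant --- hence constant --- on the connected set $U$. Writing $A(V_1, V_2) \subseteq \mathrm{int}(K_P)$ for the set of matrices whose eigenvalues lie in $V_1 \cup V_2$ with one in each $V_i$ (or both in $V_1$ when $V_1 = V_2$), we obtain $U \subseteq A(V_1, V_2)$. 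A parallel connectedness argument --- realizing the diagonalizable part of $A(V_1, V_2)$ as the image of the connected set $V_1 \times V_2 \times \mathrm{GL}(2,\mathbb{C})$ under $(\lambda_1, \lambda_2, Q) \mapsto Q[\lambda_1, \lambda_2]Q^{-1}$, together with, in the case $V_1 = V_2$, explicit paths interpolating between scalar matrices and $\mathrm{GL}(2,\mathbb{C})$-conjugates of Jordan blocks --- shows $A(V_1, V_2)$ is itself connected and open in $F_P$, forcing $U = A(V_1, V_2)$.

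Finally, since the eigenvalues of $P(M)$ are the $p$-images of those of $M$, we have $P^n(A(V_1, V_2)) \subseteq A(p^n(V_1), p^n(V_2))$ for all $n \geq 0$. Sullivan's no-wandering theorem applied to $p$ yields $N, k \geq 1$ with $p^{N+mk}(V_i) = p^N(V_i)$ for $i = 1, 2$ and all $m \geq 0$. Therefore $P^{N+mk}(U) \subseteq A(p^N(V_1), p^N(V_2))$ for every $m$, and since both sides are Fatou components, equality holds, so $U$ is pre-periodic --- contradicting the wandering assumption. The main obstacle will be verifying connectedness of $A(V_1, V_2)$ in the coincident case $V_1 = V_2$, where one must construct paths crossing between the open stratum of diagonalizable matrices and the locally closed stratum of matrices similar to a single Jordan block, all while keeping both eigenvalues inside $V_1$.
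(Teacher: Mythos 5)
Your proposal uses the same core idea as the paper's proof --- continuity of the eigenvalue assignment associates to each Fatou component of $P$ a pair of Fatou components of $p$, and Sullivan's no-wandering theorem for $p$ then settles the matter --- but you execute it with more care than the paper does. The paper's version asserts the existence of continuous eigenvalue selections $\lambda_1,\lambda_2$ on a component $C_P$ with values in components $C_p^1,C_p^2$ of $F_p$, and then writes ``$P^{n_0}(C_p^1)\subseteq C_p^1$''; you instead track the (genuinely globally continuous) unordered eigenvalue pair, which avoids any monodromy issue with a globally defined ordered selection, and you correctly phrase Sullivan's conclusion as pre-periodicity $p^{N+mk}(V_i)=p^N(V_i)$ rather than periodicity of $V_i$ itself, concluding that $U$ is pre-periodic --- which is exactly what is needed, since strictly pre-periodic $V_i$ are possible and the paper's inclusion fails for those. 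The step you flag as an obstacle --- connectedness of $A(V,V)$ across the diagonalizable and non-diagonalizable strata --- poses no real difficulty: scaling the off-diagonal entry of $[\lambda]$ linearly from $1$ to $0$ gives a path from $Q[\lambda]Q^{-1}$ to the scalar $\lambda I$ lying entirely inside $A(V,V)$ (each intermediate matrix is conjugate to $[\lambda]$ and has both eigenvalues equal to $\lambda\in V$), while the diagonalizable stratum is connected as the continuous image of $V\times V\times{\rm{GL}}(2,\mathbb{C})$ under $(\lambda_1,\lambda_2,Q)\mapsto Q[\lambda_1,\lambda_2]Q^{-1}$. So your plan closes, and as a by-product it yields the stronger statement that the Fatou components of $P$ inside ${\rm{int}}(K_P)$ are precisely the sets $A(V_1,V_2)$.
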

\begin{proof}
By Proposition \ref{Fatou}, if a matrix $M\in F_P$, then $QMQ^{-1}\in F_P$ for all $Q\in {\rm{GL}}(2,\mathbb{C})$. Further,  since ${\rm{GL}}(2,\mathbb{C})$ is connected, if a matrix $M\in C_P$ where $C_P\subseteq F_P$ is a component of $F_P$, then $QMQ^{-1}$ also belongs to $C_P$ for all $Q\in {\rm{GL}}(2,\mathbb{C})$.

Now one can construct continuous functions 
\[
\lambda_1: C_P\rightarrow \mathbb{R} \text{ and } \lambda_2: C_P\rightarrow \mathbb{R}
\]
such that $\lambda_1(M)$ and $\lambda_2(M)$ are two eigenvalues of $M\in C_P$. Also 
\begin{equation*}\label{subset}
\lambda_1(C_P)\subseteq C_p^1\subseteq F_p \text{ and } \lambda_2(C_P)\subseteq C_p^2\subseteq F_p
\end{equation*}
where $C_p^1$ and $C_p^2$ are two components of $F_p$. Since there exists no wandering Fatou component of $p$, there exists $n_0\in \mathbb{N}$ such that 
\begin{equation*}
P^{n_0}(C_p^1)\subseteq C_p^1 \text{ and } P^{n_0}(C_p^2)\subseteq C_p^2
\end{equation*}
which in turn gives that 
\[
P^{n_0}(C_P)\subseteq C_P.
\]
  
\end{proof}
\no 
The following corollary is a generalization of Theorem 1 in \cite{Be} in our set-up. 
\begin{cor}
Let $J_P$  and $J_Q$ be the Julia sets of the matrix-valued polynomials  $P$ and $Q$, respectively and $P\circ Q=Q\circ P$. Then $J_P=J_Q$. Conversely if $J_P=J_Q$, then there exists a matrix-valued linear map  $\Sigma$ in $\mathcal{M}(2,\mathbb{C})$ such that $P\circ Q= \Sigma \circ Q \circ P$ where $\Sigma: M\mapsto aM+b$ with $\lvert a\rvert=1$. 
\end{cor}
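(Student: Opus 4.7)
The plan is to reduce everything to the scalar setting by exploiting the one-to-one correspondence between the matrix-valued polynomial $P$ and the scalar polynomial $p$ (as in \eqref{poly}), combined with the characterization of $J_P$ in terms of $J_p$ provided by Theorem \ref{Julia}, and then to invoke Beardon's theorem on commuting polynomials in $\mathbb{C}$. The crucial observation is that composition respects this correspondence: if $P,Q$ correspond to $p,q$, then $P\circ Q$ is precisely the matrix-valued polynomial corresponding to $p\circ q$. This is because both sides are matrix-valued polynomials with scalar coefficients, and evaluating at $[z,z]=zI$ gives $P(Q(zI))=p(q(z))\, I$, which together with the uniqueness of the correspondence forces equality of the underlying scalar polynomials.

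For the forward direction, suppose $P\circ Q=Q\circ P$. Evaluating both sides at the scalar matrix $zI$ yields $p(q(z))\,I=q(p(z))\,I$ for every $z\in \mathbb{C}$, hence $p\circ q=q\circ p$ as scalar polynomials. By Beardon's theorem (Theorem 1 in \cite{Be}), we obtain $J_p=J_q$. Plugging this into the description of $J_P$ given in Theorem \ref{Julia}, and similarly for $J_Q$, we see that $J_P$ and $J_Q$ are built from the same data $(J_p, K_p)$, so $J_P=J_Q$.

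For the converse, suppose $J_P=J_Q$. We first recover the scalar Julia sets: by Theorem \ref{Julia}, the set of matrices of the form $[\lambda]$ in $J_P$ is exactly $\{[\lambda]:\lambda\in J_p\}$, and similarly for $Q$. Hence $J_P=J_Q$ forces $J_p=J_q$. Applying the converse part of Beardon's theorem, there exists an affine $\sigma(z)=az+b$ with $|a|=1$ satisfying $p\circ q=\sigma\circ q\circ p$ on $\mathbb{C}$. Define the matrix-valued affine map $\Sigma(M)=aM+bI$, which is the matrix-valued polynomial corresponding to $\sigma$. Using again the observation that the correspondence between scalar and matrix-valued polynomials is multiplicative under composition, the identity $p\circ q=\sigma\circ q\circ p$ lifts to $P\circ Q=\Sigma\circ Q\circ P$ on $\mathcal{M}(2,\mathbb{C})$.

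The only real point to be checked carefully is the composition-preserving nature of the correspondence $p\leftrightarrow P$, together with the fact that this correspondence is injective on the algebra of scalar polynomials. Both facts are immediate from \eqref{poly}: composing two polynomials with scalar coefficients gives another polynomial with scalar coefficients when evaluated on matrices, and the scalar coefficients are determined by evaluating on the scalar matrices $zI$. Once these routine identifications are in place, both directions follow at once from Beardon's theorem and Theorem \ref{Julia}; no new dynamical input is required.
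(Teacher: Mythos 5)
Your proof is correct and fills in exactly what the paper leaves implicit when it declares this corollary ``immediate.'' The key points---that the correspondence $p\leftrightarrow P$ respects composition (checked via evaluation on scalar matrices $zI$), that $J_p$ can be recovered from $J_P$ via the non-diagonalizable slice $\{[\lambda]\}$ using Theorem \ref{Julia}, that $J_p$ determines $K_p$ (as $K_p = J_p$ together with the bounded complementary components), and that Beardon's theorem then applies in both directions---are all correctly identified and used in the intended way; no alternative route is being taken.
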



\begin{cor}
The Julia set $J_P$ has always empty interior. Further, $J_P$ is never the whole set $\mathbb{C}^4$.
\end{cor}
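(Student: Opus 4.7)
The plan is to lift both statements to the one-variable polynomial $p$ via the explicit characterization given by Theorem~\ref{Julia}: $M\in J_P$ if and only if both eigenvalues of $M$ lie in $K_p$ and at least one of them lies in $J_p$.

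First I would record the elementary one-variable fact that $J_p$ has empty interior in $\mathbb{C}$. Since $J_p=\partial K_p$ and $K_p$ is closed, any open subset of $K_p$ is automatically contained in $\mathrm{int}(K_p)$ and therefore disjoint from $\partial K_p=J_p$; equivalently, points of $F_\infty(p)$ accumulate on every point of $J_p$.

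To show $J_P$ has empty interior, I would take an arbitrary $M_0\in J_P$, write $M_0=Q_0 J_0 Q_0^{-1}$ in Jordan form, and produce matrices arbitrarily close to $M_0$ that have an eigenvalue in $F_\infty(p)$. In the diagonalizable case $J_0=[\lambda_1,\lambda_2]$ with, say, $\lambda_1\in J_p$, one sets $M_\varepsilon = Q_0[\lambda_1+\varepsilon,\lambda_2]Q_0^{-1}$ and uses the empty-interior property of $J_p$ to choose $\varepsilon$ arbitrarily close to $0$ with $\lambda_1+\varepsilon\in F_\infty(p)$. In the non-diagonalizable case $J_0=[\lambda]$ (so $\lambda\in J_p$), replace $J_0$ by $\left[\begin{smallmatrix}\lambda+\varepsilon & 1\\ 0 & \lambda\end{smallmatrix}\right]$ and make the same choice of $\varepsilon$. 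Either way $M_\varepsilon$ has an eigenvalue in $F_\infty(p)$, so by the discussion preceding Theorem~\ref{Julia} the matrix $M_\varepsilon$ lies in $F_\infty(P)\subseteq F_P$. Thus $M_0$ is a limit of Fatou points and cannot be interior to $J_P$.

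The second claim is immediate: $J_P\subseteq\overline{K_P}=\{M:\text{the eigenvalues of }M\text{ lie in }K_p\}$, and any matrix whose spectrum escapes $K_p$ (for instance $R\cdot\mathrm{Id}$ with $|R|$ larger than the escape radius of $p$) lies outside $\overline{K_P}$ and hence outside $J_P$. I expect no genuine obstacle here; the work of substance was done in Theorem~\ref{Julia}, and what remains is the routine eigenvalue perturbation sketched above, the only mild subtlety being to split the argument along Jordan type so that the perturbed Jordan matrix really is close to $J_0$ in $\mathbb{C}^4$.
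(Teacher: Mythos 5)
Your argument is correct. However, you did a bit more work than the paper seems to have in mind. Theorem~\ref{Julia} already records $J_P=\partial\overline{K_P}$, and the very topological observation you invoked for $J_p=\partial K_p$ --- the boundary of a closed set has empty interior in the ambient space, since any open $U\subseteq\partial C\subseteq C$ must lie in $\mathrm{int}(C)$ and hence miss $\partial C$ --- applies verbatim to the closed set $\overline{K_P}\subset\mathbb{C}^4$, giving $\mathrm{int}(J_P)=\emptyset$ in one line with no Jordan-type case analysis and no explicit perturbations. Your route does carry a small bonus: the perturbations you construct land in $F_\infty(P)$, so you actually prove the slightly sharper fact that $J_P\subseteq\partial F_\infty(P)$ (i.e.\ escaping matrices accumulate at every Julia point), rather than merely that $J_P$ meets no open set; but that extra precision is not needed for the corollary as stated. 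One cosmetic point: ``$J_p$ has empty interior'' and ``$F_\infty(p)$ accumulates at every point of $J_p$'' are not equivalent a priori --- the latter is stronger --- though both are true since $J_p=\partial F_\infty(p)$, and it is the stronger one your perturbation step actually uses. The second claim is fine as you argued, and is in any case an immediate consequence of the first, since $\mathbb{C}^4$ equals its own interior.
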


\begin{cor}
The sets $J_P$ and $F_P$ are completely invariant under $P$.	
\end{cor}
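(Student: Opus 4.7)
The plan is to deduce complete invariance from the explicit description of $J_P$ in Theorem \ref{Julia}, together with the classical complete invariance of $K_p$ and $J_p$ under $p$. Since $F_P=\mathcal{M}(2,\mathbb{C})\setminus J_P$, once I establish $P^{-1}(J_P)=J_P$ (equivalently, both $P(J_P)\subseteq J_P$ and $P^{-1}(J_P)\subseteq J_P$), the corresponding identity for $F_P$ is automatic. So I would focus entirely on $J_P$.

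For the forward inclusion $P(J_P)\subseteq J_P$, I would split $J_P$ into its two pieces. For a diagonalizable member $M=Q[\lambda_1,\lambda_2]Q^{-1}$ with $(\lambda_1,\lambda_2)\in (K_p\times J_p)\cup(J_p\times K_p)$, compatibility of $P$ with conjugation gives $P(M)=Q[p(\lambda_1),p(\lambda_2)]Q^{-1}$, and forward invariance of $K_p$ and $J_p$ under $p$ keeps $(p(\lambda_1),p(\lambda_2))$ in the same product set. For a Jordan-block member $M=Q[\lambda]Q^{-1}$ with $\lambda\in J_p$, the direct computation $P([\lambda])=\left[\begin{smallmatrix}p(\lambda)&p'(\lambda)\\0&p(\lambda)\end{smallmatrix}\right]$ shows that $P(M)$ is either similar to $[p(\lambda)]$ when $p'(\lambda)\neq 0$, or equals the scalar matrix $p(\lambda)I=I[p(\lambda),p(\lambda)]I^{-1}$ when $p'(\lambda)=0$; both possibilities lie in $J_P$ since $p(\lambda)\in J_p$.

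For $P^{-1}(J_P)\subseteq J_P$, I would let $M$ have eigenvalues $\lambda_1,\lambda_2$ (counted with multiplicity) and assume $P(M)\in J_P$. The eigenvalues of $P(M)$ are $p(\lambda_1),p(\lambda_2)$, and by Theorem \ref{Julia} they both lie in $K_p$ with at least one in $J_p$; backward invariance of $K_p$ and $J_p$ under $p$ transfers this property to the $\lambda_i$. A short case analysis on whether $\lambda_1=\lambda_2$ and whether $M$ is diagonalizable then places $M$ in the appropriate piece of $J_P$: if $\lambda_1\neq\lambda_2$ then $M$ is automatically diagonalizable and falls into the first piece; if $\lambda_1=\lambda_2\in J_p$ then $M$ is either scalar (diagonal, with the pair $(\lambda_1,\lambda_1)\in J_p\times J_p$) or a genuine Jordan block, both in $J_P$.

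The only real subtlety I anticipate is the bookkeeping at the interface between the two pieces of $J_P$: in the forward direction a Jordan-block member can be mapped into the diagonal piece when $p'(\lambda)=0$, and in the backward direction a scalar $M=\lambda I$ with $\lambda\in J_p$ must be recognized as belonging to the diagonal piece (via $(\lambda,\lambda)\in J_p\times J_p\subset (K_p\times J_p)\cap(J_p\times K_p)$) rather than the Jordan-block piece. Once these edge cases are handled, the corollary reduces to invoking the classical complete invariance of $K_p$ and $J_p$.
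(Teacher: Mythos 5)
Your proof is correct and takes the approach the paper has in mind (it lists this corollary among those that are "immediate" from Theorem \ref{Julia}): observe that by the theorem $M\in J_P$ iff both eigenvalues of $M$ (with multiplicity) lie in $K_p$ and at least one lies in $J_p$, that the eigenvalues of $P(M)$ are $p(\lambda_1),p(\lambda_2)$, and then invoke the classical complete invariance of $K_p$ and $J_p$ under $p$. Your handling of the two edge cases — a Jordan block $[\lambda]$ with $p'(\lambda)=0$ being sent to a scalar matrix, and a scalar $\lambda I$ with $\lambda\in J_p$ belonging to the diagonal piece — is exactly the bookkeeping needed to reconcile the two pieces of the theorem's description.
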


\begin{prop}
Let P be a matrix-valued polynomial as above and $P(\mathcal{C})=\mathcal{C}$ where $\mathcal{C}=\{M\in \mathcal{M}(2,\mathbb{C}):\rho(M)=1\}$. Then, $P(M)=\alpha M^d$ for some $\alpha$ with $\lvert \alpha \rvert=1$.
\end{prop}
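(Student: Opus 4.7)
The plan is to reduce the matrix-valued hypothesis to the classical one-variable fact that any polynomial mapping the unit circle into itself must be a unimodular constant times a monomial. Writing $p(z)=a_dz^d+\cdots+a_0$ for the scalar polynomial corresponding to $P$, where $d=\deg P\geq 2$, we have for diagonal matrices $\rho([\lambda_1,\lambda_2])=\max(|\lambda_1|,|\lambda_2|)$ and $P([\lambda_1,\lambda_2])=[p(\lambda_1),p(\lambda_2)]$. Thus the inclusion $P(\mathcal{C})\subseteq\mathcal{C}$, restricted to diagonal matrices, reads $\max(|p(\lambda_1)|,|p(\lambda_2)|)=1$ whenever $\max(|\lambda_1|,|\lambda_2|)=1$.

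My first step is to deduce from this that $|p(z)|=1$ on the whole unit circle. Fix $\lambda_0$ with $|\lambda_0|=1$ and let $\mu$ range over the closed unit disk, so that $[\lambda_0,\mu]\in\mathcal{C}$ and hence $\max(|p(\lambda_0)|,|p(\mu)|)=1$. If $|p(\lambda_0)|>1$ this is immediately contradicted. If $|p(\lambda_0)|<1$, then we would need $|p(\mu)|=1$ for every $\mu$ in the closed unit disk, forcing $p$ to be constant by the maximum modulus principle and contradicting $d\geq 2$. Hence $|p(\lambda_0)|=1$ for every $\lambda_0$ on the unit circle.

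My second step is the classical Schur--Schwarz reflection argument. Set $q(z)=z^d\,\overline{p(1/\bar z)}$, which is a polynomial of degree at most $d$ with nonzero constant term $\bar a_d$. On $|z|=1$ we have $\overline{p(z)}=1/p(z)$ (since $|p(z)|=1$) and $1/\bar z=z$, so $p(z)q(z)=z^d$ on the unit circle, and by analytic continuation this identity holds on all of $\mathbb{C}$. Since $z$ is the unique prime factor (up to units) of $z^d$ in $\mathbb{C}[z]$, both $p$ and $q$ must be monomials; combined with $\deg p=d$, this forces $p(z)=\alpha z^d$ with $|\alpha|=1$. Consequently $P(M)=\alpha M^d$, as required.

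The only genuinely nontrivial step is the first one, extracting the scalar modulus condition from the matrix-level hypothesis; the reflection trick that follows is standard. I do not expect the non-diagonalizable Jordan blocks to play a role, since the hypothesis is already fully exploited at the level of the diagonal slice $\{[\lambda_1,\lambda_2]\}$, and the surjectivity half of $P(\mathcal{C})=\mathcal{C}$ is not needed at all.
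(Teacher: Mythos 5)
Your proof is correct and takes a genuinely different, more self-contained route than the paper's. The paper argues via connectedness of $\mathcal{B}=\{M:\rho(M)<1\}$: since $\mathcal{B}$ is connected, $P$ maps it wholly into $\mathcal{B}$ or wholly into $\mathcal{B}^*=\{M:\rho(M)>1\}$, the latter being incompatible with $P(\mathcal{C})=\mathcal{C}$; from $P(\mathcal{B})\subseteq\mathcal{B}$ it deduces $p(\{|z|=1\})=\{|z|=1\}$ and then cites Beardon's Theorem~1.3.1 to finish. You instead read off the scalar constraint $|p|\equiv 1$ on the unit circle directly from the diagonal slice, and close the argument with the elementary Schur--Schwarz reflection $q(z)=z^d\,\overline{p(1/\bar z)}$, $p\,q=z^d$, so no external citation is needed. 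Your observation that only the inclusion $P(\mathcal{C})\subseteq\mathcal{C}$ is used --- surjectivity plays no role --- is a genuine sharpening that the paper's route leaves invisible. One small streamlining: the maximum-modulus detour in your first step is unnecessary. Simply take $\mu=\lambda_0$, i.e.\ apply the hypothesis to the scalar matrix $\lambda_0 I\in\mathcal{C}$, for which $P(\lambda_0 I)=p(\lambda_0)I$ and hence $|p(\lambda_0)|=\rho\bigl(P(\lambda_0 I)\bigr)=1$ immediately.
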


\begin{proof}
Let 
$$\mathcal{B}=\{M\in \mathcal{M}(2,\mathbb{C}):\rho(M)<1\} \text{ and } \mathcal{B^*}=\{M\in \mathcal{M}(2,\mathbb{C}):\rho(M)>1\}.
$$
Since $\mathcal{B}$ is connected, $P$ maps $\mathcal{B}$  either into $\mathcal{B}$ or into $\mathcal{B}^*$.  If $P(\mathcal{B})\subseteq \mathcal{B}^*$, then it contradicts the fact that $P(\mathcal{C})=\mathcal{C}$. Thus $P(\mathcal{B})\subseteq \mathcal{B}$. Consequently, we get 
$$p(\{z\in \mathbb{C}:\lvert z\rvert=1\})=\{z\in \mathbb{C}:\lvert z\rvert=1\}
$$
which implies $p(z)=\alpha z^d$ with $\lvert \alpha\rvert=1$ (by applying Theorem 1.3.1 in \cite{Be1}). This completes the proof.
\end{proof}

\no
A matrix $M$ is periodic for $P$ if $P^n(M)=M$ for some $n\geq 1$. Therefore a matrix 
  \[
 M=Q[\lambda_1, \lambda_2]Q^{-1}.
 \] is periodic for $P$ if and only if $\lambda_1$ and $\lambda_2$ both are periodic for the polynomial $p$ in $\mathbb{C}$. Similarly, 
 \[
 M=Q[\lambda]Q^{-1}
 \]
  is periodic for $P$ if and only if $p^n(\lambda)=\lambda$  and ${(p^n)}'(\lambda)=1$ for some $n\in \mathbb{N}$ i.e. $\lambda$ is a parabolic point for $p^n$. This implies $\lambda\in J_p$. Therefore, we have 
 \begin{prop}
 If a  non-diagonalizable $2\times 2$ matrix $M\in \mathcal{M}(2,\mathbb{C})$ is periodic, then $M\in J_P$.
 \end{prop}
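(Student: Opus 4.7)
The plan is to use the Jordan canonical form to reduce the statement to a statement about the polynomial $p$, and then invoke Theorem \ref{Julia}. Since $M$ is non-diagonalizable, up to conjugation we may write $M = Q[\lambda]Q^{-1}$ for some $\lambda \in \mathbb{C}$ and $Q \in \mathrm{GL}(2,\mathbb{C})$, where $[\lambda]$ is the Jordan block.

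First I would record the explicit iteration formula already used in Section 2, namely
\[
P^n(M) = Q\left[\begin{array}{cc} p^n(\lambda) & (p^n)'(\lambda) \\ 0 & p^n(\lambda) \end{array}\right]Q^{-1},
\]
which follows from the usual fact that a polynomial applied to a Jordan block yields the polynomial on the diagonal and its derivative above. Periodicity $P^n(M) = M$ forces the matrix in brackets to equal $[\lambda]$, which is equivalent to the two scalar conditions $p^n(\lambda) = \lambda$ and $(p^n)'(\lambda) = 1$. In other words, $\lambda$ is a periodic point of $p$ of period (dividing) $n$ whose multiplier along the periodic cycle equals $1$, i.e.\ a parabolic periodic point of $p$.

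By the classification theorem for Fatou components of a polynomial in $\mathbb{C}$, every parabolic periodic point lies in $J_p$. Hence $\lambda \in J_p$. Theorem \ref{Julia} then immediately gives $M = Q[\lambda]Q^{-1} \in J_P$, since the second collection appearing in the description of $J_P$ is precisely $\{Q[\lambda]Q^{-1}: \lambda \in J_p,\; Q \in \mathrm{GL}(2,\mathbb{C})\}$.

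There is no substantive obstacle; the argument is a direct reading of the Jordan-block iteration formula together with Theorem \ref{Julia}. The only point worth stating carefully is the equivalence between $P^n(M) = M$ and the pair of conditions on $p^n$, which is precisely the content of the discussion immediately preceding the proposition.
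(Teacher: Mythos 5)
Your argument is correct and matches the paper's own reasoning: write $M=Q[\lambda]Q^{-1}$, apply the Jordan-block iteration formula to deduce $p^n(\lambda)=\lambda$ and $(p^n)'(\lambda)=1$, conclude $\lambda$ is parabolic hence in $J_p$, and invoke Theorem \ref{Julia}. This is precisely the paper's proof, so no further comment is needed.
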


 \section{Green functions and  B\"{o}ttcher functions}
\subsection{Proof of the theorem \ref{Green fun}} To show the existence of the dynamical Green functions of $P$, we first consider the matrices which are diagonalizable.
Note that the sequences
$$
\frac{1}{d^n}\log \lVert P^n(\Lambda)\rVert \text{ and } \frac{1}{d^n}\log \lVert P^n(Q\Lambda Q^{-1})\rVert
$$
if converge, converge simultaneously to the same matrix  where 
$
\Lambda=[\lambda_1, \lambda_2]
$
and
\begin{equation}\label{Green convg}
\frac{1}{d^n}\log \lVert P^n(\Lambda)\rVert=\frac{1}{d^n} \log \left(\max \{\lvert p^n(\lambda_1)\rvert, \lvert p^n(\lambda_2)\rvert\}\right).
\end{equation}
For any two complex numbers  $\lambda_1, \lambda_2\in F_p$, either $G_p(\lambda_1)\geq G_p(\lambda_2)$ or  $G_p(\lambda_1)<G_p(\lambda_2)$ where $G_p$ is the  Green function of $K_p$ with logarithmic pole at infinity (see, \cite{MNTU}). Therefore, the sequence (\ref{Green convg}) converges 
and 
\begin{equation}\label{max}
G(\Lambda)=\max \{G_p(\lambda_1), G_p(\lambda_2)\}.
\end{equation}
\no
Now there exists, $R>1$ large enough such that  $M_1<G_p(\lambda)-\log \lvert \lambda \rvert<M_2$ for $\lvert\lambda\rvert > R$ with $M_1, M_2 \in \mathbb{R}$.

\medskip 
\no 
{\it{Case 1:}}
Now let for a given diagonalizable matrix $M$,  $R<\rho(M)=\lvert\lambda_1\rvert \geq  \lvert \lambda_2\rvert \leq R$ and  $G_p(\lambda_1)\geq G_p(\lambda_2)$. Therefore 
\[
 M_1< G_p(\lambda_1)-\log \lvert \lambda_1\rvert = G(M)-\log \rho(M) < M_2. 
\]
Now let us consider the  same assumption as above except the fact that $G_p(\lambda_1)\geq G_p(\lambda_2)$, i.e,  we consider $G_P(\lambda_1)< G_p(\lambda_2)$. Then
\[
M_1<G_p(\lambda_1)-\log\lvert\lambda_1\rvert < G_p(\lambda_2)-\log\lvert\lambda_1\rvert=G(M)-\log \rho(M)< G_p(\lambda_2)< M
\]
where $M=\sup\{G_p(\lambda):\lvert\lambda\rvert=R\}$.

\medskip 
\no 
{\it{Case 2:}}
Let $\lvert\lambda_1\rvert \geq \lvert \lambda_2\rvert>R$ and $G_p(\lambda_1)\leq G_p(\lambda_2)$. Then 
\[
M_1<G_p(\lambda_1)-\log \lvert \lambda_1\rvert<G_p(\lambda_2)-\log \lvert \lambda_1\rvert < G_p(\lambda_2)-\log \lvert \lambda_2\rvert <M_2,
\]
which in turn gives
\[
M_1<G(M)-\log \rho(M)<M_2.
\]

\medskip 
\no 
If  $G_p(\lambda_1)>G_p(\lambda_2)$, then clearly, 
\[
M_1<G(M)-\log \rho(M)<M_2.
\]
Thus 
$$
G(M)=\log \rho(M)+O(1)
$$
in $\{\rho(M)>R\}$. 
 
Now we consider a non-diagonalizable matrix $\Lambda=[\lambda]$
and  consider the following sequence of matrices.
\[
\frac{1}{d^n}\log \lVert P^n(\Lambda)\rVert=\frac{1}{d^n} \log (\max \{\lvert p^n(\lambda)\rvert, \lvert (p^n)'(\lambda)\rvert\}).
\]
{\it{Case 1:}} Let $\lambda\in {\rm{int}}(K_p)$, then there exists an $L>1$ such that $\lvert (p^n)'(\lambda)\rvert<L^n$ since ${\rm{int}}(K_p)$ is invariant under $p$. Thus
\[
G(\Lambda)=\lim_{n\rightarrow \infty}\frac{1}{d^n}\log \lVert P^n(\Lambda)\rVert=0.
\] 

\no
{\it{Case 2:}} Let $\lambda\in I_p$. Further assume that $\varphi_p$, defined on $\{z:\lvert z \rvert>R\}$, is the B\"{o}ttcher coordinate of $p$ satisfying
\[
\phi_p(p^n(z))={(\phi_p(z))}^{d^n}
\]
for all $z$ with $\lvert z\rvert>R$ and for all $n\geq 1$. Now note that 
\begin{equation*}
\phi_p'(p^n(z))(p^n)'(z)=d^n {(\phi_p(z))}^{d^n-1}\phi_p'(z).
\end{equation*} 
Thus 
\begin{equation}
\frac{(p^n)'(z)}{d^n A_z p^n(z)}=\frac{\phi(p^n(z))}{p^n(z)}.\frac{1}{\phi'(p^n(z))}
\end{equation}
where $A_z={(\phi(z))}^{-1}\phi'(z)\neq 0$. Further, 
\[
\lim_{n\rightarrow \infty} \frac{\phi(p^n(z))}{p^n(z)}, \lim_{n\rightarrow \infty} \frac{1}{\phi'(p^n(z))} \neq 0.
\]
Thus 
\[
\lim_{n\rightarrow \infty}\frac{(p^n)'(z)}{p^n(z)d^n A_z} \neq 0
\]
and consequently, we get
\begin{equation}\label{lim}
\lim_{n\rightarrow \infty} \frac{1}{d^n}\log \lvert (p^n)'(z)\rvert=\lim_{n\rightarrow \infty} \frac{1}{d^n}\log \lvert (p^n)(z)\rvert.
\end{equation}
For $\lambda\in I_p$, there exists $n_0>1$ such that $p^{n_0}(\lambda)\in \{\lvert z\rvert>R\}$. Now 
\begin{eqnarray*}
\frac{1}{d^n}\lVert P^n(\Lambda)\rVert&=&\frac{1}{d^n}\log \left\lVert P^n 
\left[
\begin{array}{cc}
\lambda & 1 \\
0& \lambda
\end{array}
\right]\right\rVert\\
&=& \frac{d^{-n_0}}{d^{n-n_0}}\log \left\lVert P^{n-n_0} 
\left[
\begin{array}{cc}
p^{n_0}(\lambda) & (p^{n_0})'(\lambda) \\
0& p^{n_0}(\lambda)
\end{array}
\right]\right\rVert
\end{eqnarray*}
and this is equal to either
\[
\frac{d^{-n_0}}{d^{n-n_0}}\log \left\lVert Q_0 P^{n-n_0} 
\left[
\begin{array}{cc}
p^{n_0}(\lambda) & 1 \\
0& p^{n_0}(\lambda)
\end{array}
\right] Q_0^{-1}\right\rVert
\sim  \frac{d^{-n_0}}{d^{n-n_0}}\log \left\lVert P^{n-n_0} 
\left[
\begin{array}{cc}
p^{n_0}(\lambda) & 1 \\
0& p^{n_0}(\lambda)
\end{array}
\right]\right\rVert
\]
or
\[
\frac{d^{-n_0}}{d^{n-n_0}}\log \left\lVert Q_0 P^{n-n_0} 
\left[
\begin{array}{cc}
p^{n_0}(\lambda) & 0\\
0& p^{n_0}(\lambda)
\end{array}
\right] Q_0^{-1}\right\rVert
\sim  \frac{d^{-n_0}}{d^{n-n_0}}\log \left\lVert P^{n-n_0} 
\left[
\begin{array}{cc}
p^{n_0}(\lambda) & 0 \\
0& p^{n_0}(\lambda)
\end{array}
\right]\right\rVert
\]
for some $Q_0\in {\rm{GL}}(2,\mathbb{C})$.
In either of these cases, using (\ref{lim}) or (\ref{max}) respectively and implementing the functorial property of $G_p$, we finally get that 
\begin{equation}\label{nondia}
\frac{1}{d^n}\lVert P^n(\Lambda)\rVert
=d^{-n_0}\frac{1}{d^{n-n_0}}\lVert P^n(\Lambda)\rVert=d^{-n_0}G_p(p^{n_0}(\lambda))=G_p(\lambda).
\end{equation}
Therefore combining (\ref{max}) and (\ref{nondia}), we get that
\begin{equation}\label{Green des}
G(M)=\max\{G_p(\lambda_1), G_p(\lambda_2): \lambda_1, \lambda_2 \text{ are the eigenvalues of } M\}
\end{equation} 
and
\begin{equation}\label{Green}
G(P(M))=dG(M)
\end{equation}
for all $M\in \mathcal{M}(2,\mathbb{C})$ where $d$ is the degree of the polynomial $p$ in $\mathbb{C}$ which corresponds the matrix-valued polynomial $P$ in $\mathcal{M}(2,\mathbb{C})$. Since the Green function $G_p$ converges uniformly on compact subset of $\mathbb{C}$, that $G_n$ converges to $G$ uniformly on compacts follows from (\ref{Green des}).

Now choose $R>1$ sufficiently large such that $p(\{\lvert z\rvert>R\})\subset \{\lvert z\rvert>R\}\subset F_\infty(p)$ and we aim to show that $G$ is pluri-harmonic in $\Omega=\{\rho(M)>R\}$. Since $\rho(M)>R$, $\lVert M \rVert \geq \rho(M)>R$ and consequently, $\lVert P^n(M)\rVert\geq\rho(P^n(M))>R$. Consider following sequence of  positive pluri-harmonic functions 
$$
\frac{1}{d^n}\log \lVert P^n(M)\rVert
$$
in $\Omega$. Then by applying Harnack's theorem, we get that $G$ is a pluri-harmonic function in $\Omega$.\
 
Now for any $M\in \mathcal{M}(2,\mathbb{C})$ with atleast one of the eigenvalues in $F_\infty(p)$, there exists an open neighborhood $\mathcal{N}_M$ and $n_0>1$ such that $P^{n_0}(\mathcal{N}_M)\subseteq \Omega$. Thus $G\circ P^{n_0}$ is pluri-harmonic in $\mathcal{N}_{M_0}$. Using (\ref{Green}), we have that 
$$
G(P^{n_0}(M))=d^{n_0}G(M)
$$   
in $\mathcal{M}(2,\mathbb{C})$. Therefore, $G$ is pluri-harmonic in $\mathcal{N}_M$. Thus we prove that $G$ is pluri-harmonic in $F_\infty(P)$. Now by construction, $G$ vanishes in $\overline{K_P}=\mathcal{M}(2,\mathbb{C})\setminus F_\infty(P)$. Hence, $G$ is pluri-harmonic in $\rm{int}(\overline{K_P})$ and in turn, $G$ is pluri-harmonic in $F_P$. Clearly, $G$ is pluri-subharmonic in $\mathcal{M}(2,\mathbb{C})$.  

\subsection*{Proof of the theorem \ref{Bot}}
Let for $R>1$ large enough, $\varphi_p: \Omega_R\rightarrow \mathbb{C}$, where $\Omega_R=\{z\in \mathbb{C}:\lvert z \rvert > R\}$, defined as follow,
\[
z \mapsto bz+ b_0 + \frac{b_1}{z}+\cdots,
\]
be the B\"{o}ttcher function of the polynomial $p$ in $\mathbb{C}$. Now for each $n\geq 1$, define $$\Phi_n: \Omega=\{M\in \mathcal{M}(2,\mathbb{C}): \text{the eigenvalues of $M$ lie in } \Omega_R \}\rightarrow \mathcal{M}(2,\mathbb{C})$$ as follow, 
\[
\Phi_n(M)=bM+ b_0 + b_1 M^{-1}+\cdots+ b_n M^{-n}.
\]
Clearly, for each $n\geq 1$, $\Phi_n$ is a holomorphic function in $\Omega$. Define $\Phi: \Omega\rightarrow \mathcal{M}(2,\mathbb{C})$ as follow:
\begin{equation}\label{defPhi}
\Phi(M)=
\begin{cases}
 Q_M\left[ {\begin{array}{cc}
\varphi_p(\lambda_M) & 0 \\
0 & \varphi_p(\mu_M) \\
\end{array} } \right] Q_M^{-1} & \text{ if }M=Q_M[\lambda_M, \mu_M] Q_M^{-1} \\
\\
Q_M \left[ {\begin{array}{cc}
\varphi_p(\lambda_M) & \varphi_p'(\lambda_M) \\
0 & \varphi_p(\lambda_M) \\
\end{array} } \right]Q_M^{-1}  & \text{ if }M=Q_M[\lambda_M] Q_M^{-1} 
\end{cases}
\end{equation}
{\it{Claim:}} $\Phi_n$ converges to the function $\Phi$ uniformly on compacts in $\Omega$ as $n\rightarrow \infty$ and thus, $\Phi$ is holomorphic in $\Omega$.

\no
Let $M_0\in \Omega$ with  eigenvalues $\lambda_{M_0}$ and $\mu_{M_0}$ and assume that they are distinct.  Consider a small neighborhood $\mathcal{B}_{M_0}\subset \Omega$ of $M_0$. Let $M\in \mathcal{B}_{M_0}$, then
\[
M=
Q_M\left[\begin{array}{cc}
\lambda_{M} & 0\\
0 & \mu_{M}
\end{array}
\right]Q_M^{-1}
=
\left[\begin{array}{cc}
\lambda_{M}+\frac{b_M c_M}{\det Q_M}(\lambda_M-\mu_M) & \frac{a_M b_M}{\det Q_M}(\lambda_M-\mu_M)\\
\frac{c_M d_M}{\det Q_M}(\lambda_M-\mu_M) & \mu_M+\frac{b_M c_M}{\det Q_M}(\mu_M-\lambda_M)
\end{array}
\right]
\]
where 
\[ 
Q_M=
\left[\begin{array}{cc}
a_M & b_M\\
c_M & d_M
\end{array}\right]
\]
and clearly, $\lambda_M\neq \mu_M$.
Choosing $\mathcal{B}_{M_0}$ sufficiently small, we can choose $L_{M_0}>1$ such that 
\[
\left \lvert \frac{b_M c_M}{\det(Q_M)}\right \rvert, \left \lvert \frac{a_M b_M}{\det(Q_M)}\right \rvert, \left \lvert \frac{c_M d_M}{\det(Q_M)}\right \rvert < L_{M_0}
\] 
for all $M\in \mathcal{B}_{M_0}$.
Thus the modulus of each coordinate of $\Phi_n(M)$  is uniformly bounded by some constant for all $M\in \mathcal{B}_{M_0}$ and for all $n\geq 1$. Therefore, there exists a subsequence $\{\Phi_{n_k}\}$ of $\{\Phi_n\}$ which converges uniformly to a holomorphic function  $\Phi$ in $\mathcal{B}_{M_0}$. But note that for any $M\in \mathcal{B}_{M_0}$,  and  for any subsequence of $\{\Phi_n\}$, the limit function is always $\Phi$.
Hence $\{\Phi_n\}$ converges uniformly to the function $\Phi$ in $\mathcal{B}_{M_0}$. Now 
\begin{equation}\label{botgreen}
\varphi_p \circ p(z)={(p(z))}^d \text{ and } G_p(z)=\log \lvert \varphi_p(z)\rvert
\end{equation}
for all $z\in \Omega_R$ (see \cite{MNTU}). Then using (\ref{defPhi}) and (\ref{botgreen}), we get that
\begin{equation*} 
\Phi_P \circ P(M)= {(\Phi_P(M))}^d \text{ and } G(M)=\log \rho(\Phi_P(M))
\end{equation*}
for all $M\in \Omega$.
 
Now if $M_0\in \Omega$ is such that it has identical eigenvalues, then a similar set of arguments as above and the the same techniques as in the proof of the Theorem \ref{Julia} can be applied to prove that the sequence $\{\Phi_n\}$ converges uniformly to the holomorphic  function $\Phi$ on any compact set in $\Omega$ satisfying (\ref{botgreen}).  Hence the proof follows.

\end{document}